\newcommand{\Var} {\mbox{$\rm{Var}$\,}}
\newtheorem{lemma}{Lemma}
\newtheorem{theorem}{Theorem}
\newtheorem{corollary}{Corollary}
\newtheorem{definition}{Definition}
\newtheorem{proposition}{Proposition}
\journal{Theoretical Population Biology}
\begin{document}
%\linenumbers

\begin{frontmatter}

%% Title, authors and addresses

%% use the tnoteref command within \title for footnotes;
%% use the tnotetext command for theassociated footnote;
%% use the fnref command within \author or \address for footnotes;
%% use the fntext command for theassociated footnote;
%% use the corref command within \author for corresponding author footnotes;
%% use the cortext command for theassociated footnote;
%% use the ead command for the email address,
%% and the form \ead[url] for the home page:
%% \title{Title\tnoteref{label1}}
%% \tnotetext[label1]{}
%% \author{Name\corref{cor1}\fnref{label2}}
%% \ead{email address}
%% \ead[url]{home page}
%% \fntext[label2]{}
%% \cortext[cor1]{}
%% \address{Address\fnref{label3}}
%% \fntext[label3]{}

\title{Coalescence and sampling distributions for Feller diffusions}

%% use optional labels to link authors explicitly to addresses:
\author[label1]{Conrad J.\ Burden}
\ead{conrad.burden@anu.edu.au}
\author[label2]{Robert C.\ Griffiths}
\ead{Bob.Griffiths@Monash.edu}
\address[label1]{Mathematical Sciences Institute, Australian National University, Canberra, Australia}
\address[label2]{School of Mathematics, Monash University, Australia}

\begin{abstract}
%\begin{linenumbers}
Consider the diffusion process defined by the forward equation $u_t(t, x) = \tfrac{1}{2}\{x u(t, x)\}_{xx} - \alpha \{x u(t, x)\}_{x}$ for $t, x \ge 0$ and 
$-\infty < \alpha < \infty$, with an initial condition $u(0, x) = \delta(x - x_0)$.  This equation was introduced and solved by Feller to model the 
growth of a population of independently reproducing individuals.  We explore important coalescent processes related to Feller's solution.  
For any $\alpha$ and $x_0 > 0$ we calculate the distribution of the random variable $A_n(s; t)$, defined as the finite number of ancestors at a time 
$s$ in the past of a sample of size $n$ taken from the infinite population of a Feller diffusion at a time $t$ since since its initiation.  
In a subcritical diffusion we find the distribution of population and sample coalescent trees from time $t$ back, conditional on non-extinction 
as $t \to \infty$.  In a supercritical diffusion we construct a coalescent tree which has a single founder and derive the distribution of coalescent times.
%\end{linenumbers}
\end{abstract}

\begin{keyword}
Coalescent \sep Diffusion process \sep Branching process \sep Feller diffusion \sep Sampling distributions
%%% keywords here, in the form: keyword \sep keyword
\end{keyword}
\end{frontmatter}

%
%%%%%%%%%%%% Section: Introduction %%%%%%%%%%%%
%
\section{Introduction}
\label{sec:Introduction}

\citet[Section~5]{Feller1939, feller1951diffusion} introduced the process governed by the diffusion equation
\begin{equation}	\label{FellersPDE}
u_t(t, x) = \tfrac{1}{2}\{xu(t, x)\}_{xx} - \alpha\{xu(t, x)\}_{x}, \qquad 0 \le x < \infty, \; \alpha \in \mathbb{R},  
\end{equation}
as the continuum analogue of a classical branching process, primarily with the intention of modelling the growth of a population of statistically 
independently reproducing individuals.  This stochastic process has subsequently found broader applications in biology, genetics, ecology, nuclear physics, statistical physics, 
seismology and finance (see \citet[Section I]{gan2015singular} for a detailed list of references).  

In the present paper we are interested in the Feller diffusion as the continuum limit of a Bienaym\'e-Galton-Watson (BGW) process, and applications to population genetics 
including  sampling distributions of populations undergoing neutral mutations between multiple types.  Our approach is based on the coalescent.  
In common with the diffusion limit of the Wright-Fisher and Moran coalescent trees, the coalescent of a Feller diffusion has the property that it `comes down from 
infinity'.  That is to say, the entire, uncountably infinite, population at any time chosen as the present has a finite number of ancestral lineages at any positive time in the past (see Fig.~\ref{fig:FellerSolution}).  
The key step in our analysis is Theorem~\ref{thm:PolyaAeppli} in which we calculate the distribution of the discrete random variable $A_\infty(s; t)$, defined 
to be the number of ancestors at an earlier time $t - s$ of the entire population at the current time $t$ since initiation of the process.  

%%%%%%%%%%%% Figure: coming down from infinity  %%%%%%%%%%%%
\begin{figure}[t]
\begin{center}
\centerline{\includegraphics[width=1\textwidth]{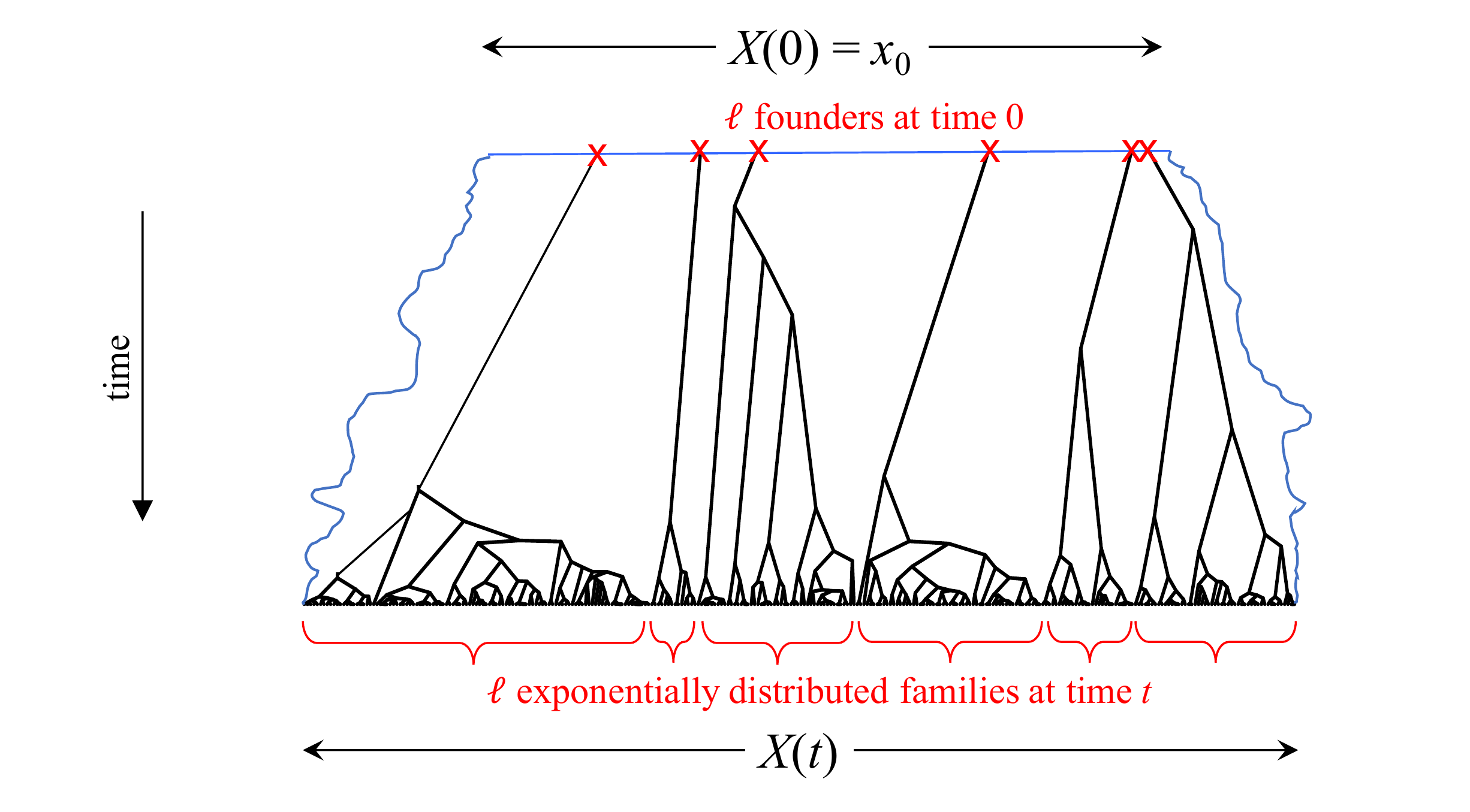}} % Plot constructed using <RRP_tree v4.R> 
\caption{Structure of the ancestral tree of a Feller diffusion conditioned on $X(0) = x_0$.  The ancestry of the population $X(t)$  
``comes down from infinity", that is, the number of ancestors (black lines) of the currently observed infinite population at any finite time in the past is finite.  
The blue lines either side represent the entire, infinite population at earlier times, most of whose descendant lines become extinct by time $t$.  The 
final population is the sum of a Poisson number of independent exponentially distributed family sizes (see Eq.~(\ref{FellersPoissonGammaMixture})).  } 
\label{fig:FellerSolution}
\end{center}
\end{figure}

Starting from this theorem we follow two paths.  In the first we consider the limit $t \to \infty$ at fixed $s$
of a subcritical Feller diffusion conditioned on survival of the population.  Results for expected inter-coalescent waiting times for a random sample agree 
with a recent, less straightforward derivation using the Laplace transform by \citet{BurdenGriffiths23}.  

The second path involves taking the limit $s \to 0$ at fixed $t$.  At any $s > 0$ we find that the ancestry of the process over the interval $[0, t - s]$ is equivalent to 
the ancestry of
a homogeneous birth-death (BD) process with $s$-dependent birth and death rates.  Both of these rates become infinite as 
$s \to 0$
while their difference remains constant 
and equal to the parameter $\alpha$.  
Useful tools for characterising the properties of coalescent trees generated by a homogeneous birth-death (BD) process are the reconstructed process 
(RP)~\citep{Nee94} and the reversed-reconstructed process (RRP)~\citep{Aldous05,Gernhard08,Stadler09,Wiuf18,Ignatieva20}, which assumes an 
improper prior on the time since initiation of the process, starting with a single ancestor.  
We construct the coalescent tree for a Feller diffusion as the limit of a RP from a linear BD process, and from this construct a RRP 
for the supercritical process and hence calculate the distribution of coalescent times.

Previously \citet{burden2016genetic} and \citet{BurdenSoewongsoso19} have studied coalescence of the supercritical Feller diffusion from a frequentist point of view, 
in the sense that they calculate confidence intervals on the time since initiation and initial parameters of the Feller diffusion in terms of a later observation.  The 
current paper is complementary in that the analysis of the supercritical diffusion is Bayesian in that the RRP requires a uniform prior on the time since initiation 
of the ancestral tree.  

The Feller diffusion is implicit within the near-critical continuum limit of a continuous-time BGW process studied by \citet{o1995genealogy} and \cite{Harris20}, who also 
calculate distributions of coalescent times.  However their analysis differs in that it assumes a process initiated by a single progenitor, their near-critical limit consists of 
taking the time since initiation to infinity while keeping the branching time and reproduction rate constant, and results are stated in terms of the fraction of time since 
initiation.  These differences make direct comparison with the current work problematic, though within the text of this paper we will note connections with \citet{o1995genealogy}
where appropriate.  

\citet{Crespo21} analyse coalescence in the infinite population limit of a RRP constructed from a BD process.  Their limiting process differs from the $s \to 0$ diffusion 
limit in the current paper in that the time axis is rescaled.  Numerical simulations of their 
coalescent model employ further approximations which amount to setting the population size as a deterministic function rather a stochastic process.  

The layout of the paper is as follows.  In Section~\ref{sec:PropertiesOfFeller} we summarise properties of the Feller diffusion including its connection 
with continuum limits of BD and BGW processes and the interpretation of Feller's solution.  This establishes known results and a notation required for subsequent sections.   
In Section~\ref{sec:Coalescence} distributions are derived for $A_n(s,\; t)$, defined to be the number of ancestors at an earlier time $t - s$ of the entire population 
at the current time $t$ since initiation of the process, and for the corresponding count $A_\infty(s; t)$ of the number of ancestors of the entire population at time $t$. 
The limit $t \to \infty$ at fixed $s$
of a sub-critical Feller diffusion conditioned on survival and the quasi-stationary sampling distribution are dealt with in Section~\ref{sec:CoalescenceTtoInfty}.  
The limit $s \to 0$ at fixed $t$ and
the connection between the RP of a BD process and the coalescent tree of a Feller diffusion are dealt with in Section~\ref{sec:sToZero}.  
In Section~\ref{sec:RRProcess} the coalescent tree for the entire population generated by a a supercritical Feller diffusion is constructed as a RRP, 
from which the distributions of coalescent times for the population and for a finite sample are calculated in Section~\ref{sec:RRPCoalescentTimes}.   
Results are summarised in Section~\ref{sec:Conclusions}.  

%
%%%%%%%%%%%% Section: Properties of the Feller Diffusion %%%%%%%%%%%%
%
\section{Properties of the Feller Diffusion}
\label{sec:PropertiesOfFeller}

For any bounded continuous function $g$ with second derivatives existing, a standard backward Kolmogorov equation for a stochastic process 
$X(t)$ is
\[
\frac{d}{d t}\mathbb{E}\left[g(X(t))\right] = \mathbb{E}\left[{\cal L}g(X(t))\right],
%\label{query:01}
\]
where the right side is the expectation of the function $h$ defined by $h = {\cal L}g$.  The operator $\cal L$ is referred to as the {\em generator} of the process.  
In general, if a continuous random variable $X(t)$ evolves so that 
\begin{equation}	\label{aAndbLimits}
\begin{split}
\mathbb{E}\left[X(t + \delta t) - X(t) \mid X(t) = x \right] & =  a(x) \delta t + o(\delta t), \\
\mathbb{E}\left[(X(t + \delta t) - X(t))^2 \mid X(t) = x\right] & =  b(x) \delta t + o(\delta t), \\
\mathbb{E}\left[(X(t + \delta t) - X(t))^k \mid X(t) = x\right] & =  o(\delta t), \quad k \ge 3,\\
\end{split}
\end{equation}
as $\delta t \to 0$, then the generator takes the form~\citetext{\citealp[p214]{KarlinTaylor81}; \citealp[Chapter~4]{Ewens:2004kx}}
\[
{\cal L} =  \tfrac{1}{2}b(x) \frac{\partial^2}{\partial x^2} + a(x) \frac{\partial}{\partial x}.
\]

For the particular case $a(x) = \alpha x$, $b(x) = x$ we refer to the process with generator  
\begin{equation}		\label{fellerGenerator}
{\cal L} = \tfrac{1}{2} x \frac{\partial^2}{\partial x^2} +  \alpha x \frac{\partial}{\partial x}, 
\end{equation}
as a {\em Feller diffusion}~\citep{feller1951diffusion,feller1951two}.   The process is said to be {\em sub-critical}, {\em critical} or {\em super-critical} if 
$\alpha <0$, $\alpha = 0$ or $\alpha > 0$ respectively.  A Feller diffusion can arise as a limit of a BD process or as 
a limit of a BGW process.  
%
%%%%%%%%%%%% Subsection: Feller Diffusion as the limit of a BD process %%%%%%%%%%%%
%
\subsection{Feller Diffusion as the limit of a BD process}
\label{sec:BPlimit}

\citet{Baake15} argue that \citet{feller1951diffusion} regarded Eq.~(\ref{FellersPDE}) primarily in terms of the large-population, near-critical 
limit of a continuous-time BD process.  In Proposition~\ref{proposition1} we set out details of how this limit scales, using notation which will 
prove useful in Section~\ref{sec:sToZero}.  

Consider a linear BD process with birth rate $\hat\lambda(\epsilon)$ and death rate $\hat \mu(\epsilon)$ specified as functions of a parameter 
$\epsilon \in \mathbb{R}_{\ge 0}$ with the property 
\[
\begin{split}
\hat\lambda(\epsilon) &= \tfrac{1}{2} \epsilon^{-1} + \tfrac{1}{2} \alpha + \mathcal{O}(\epsilon), \\
\hat \mu(\epsilon)        &= \tfrac{1}{2}\epsilon^{-1} - \tfrac{1}{2} \alpha + \mathcal{O}(\epsilon), 
\end{split}
\]
as $\epsilon \to 0$.  If 
$M_\epsilon(t)$
is the number of particles alive at time $t$, then for $n, y = 0, 1, 2, \ldots$, 
\begin{eqnarray}	\label{linearBDtransitions}
p_{n,y} &:= & \mathbb{P}\left(M_\epsilon(t + \delta t) = y \mid M_\epsilon(t) = n \right)  \nonumber \\
	& = & \begin{cases}
		n\hat\lambda \delta t + \mathcal{O}(\delta t^2), & y = n + 1; \\
		1 - n(\hat\lambda + \hat \mu)\delta t + \mathcal{O}(\delta t^2), & y = n; \\
		n \hat\mu \delta t + \mathcal{O}(\delta t^2), & y = n - 1; \\
		\mathcal{O}(\delta t^2), & \mbox{otherwise.}
	\end{cases}
\end{eqnarray}
%
%%	Proposition 1
%
\begin{proposition}\label{proposition1}
Set 
$X(t) = \epsilon M_\epsilon(t)$.
  Then the limiting generator of the process $X(t)$ as $\epsilon \to 0$ is the Feller diffusion generator Eq.~(\ref{fellerGenerator}).  
\end{proposition}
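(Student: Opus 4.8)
The plan is to compute the infinitesimal moments of $X(t) = \epsilon M_\epsilon(t)$ directly from the transition probabilities~(\ref{linearBDtransitions}) and verify that they match the form~(\ref{aAndbLimits}) with $a(x) = \alpha x$ and $b(x) = x$, so that the general formula for the generator yields~(\ref{fellerGenerator}). First I would condition on $M_\epsilon(t) = n$, equivalently $X(t) = x$ with $x = \epsilon n$, and write the one-step increment $X(t + \delta t) - X(t)$, which takes the value $+\epsilon$ with probability $n\hat\lambda \delta t + \mathcal{O}(\delta t^2)$, the value $-\epsilon$ with probability $n\hat\mu\,\delta t + \mathcal{O}(\delta t^2)$, and $0$ otherwise. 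Hence the first conditional moment is
\[
\mathbb{E}\left[X(t+\delta t) - X(t) \mid X(t) = x\right] = \epsilon n(\hat\lambda - \hat\mu)\,\delta t + \mathcal{O}(\delta t^2) = x(\hat\lambda - \hat\mu)\,\delta t + \mathcal{O}(\delta t^2),
\]
and the second conditional moment is
\[
\mathbb{E}\left[(X(t+\delta t) - X(t))^2 \mid X(t) = x\right] = \epsilon^2 n(\hat\lambda + \hat\mu)\,\delta t + \mathcal{O}(\delta t^2) = \epsilon x(\hat\lambda + \hat\mu)\,\delta t + \mathcal{O}(\delta t^2).
\]

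Next I would substitute the assumed expansions $\hat\lambda(\epsilon) = \tfrac12\epsilon^{-1} + \tfrac12\alpha + \mathcal{O}(\epsilon)$ and $\hat\mu(\epsilon) = \tfrac12\epsilon^{-1} - \tfrac12\alpha + \mathcal{O}(\epsilon)$. Then $\hat\lambda - \hat\mu = \alpha + \mathcal{O}(\epsilon)$, so $a(x) = \lim_{\epsilon\to 0} x(\hat\lambda - \hat\mu) = \alpha x$; and $\epsilon(\hat\lambda + \hat\mu) = \epsilon(\epsilon^{-1} + \mathcal{O}(\epsilon)) = 1 + \mathcal{O}(\epsilon^2)$, so $b(x) = \lim_{\epsilon\to 0}\epsilon x(\hat\lambda + \hat\mu) = x$. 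For the higher moments, $|X(t+\delta t) - X(t)|^k = \epsilon^k$ on the event of a birth or death and $0$ otherwise, so $\mathbb{E}[(X(t+\delta t) - X(t))^k \mid X(t) = x] = \epsilon^k n(\hat\lambda + \hat\mu)\delta t + \mathcal{O}(\delta t^2) = \epsilon^{k-1} x(1 + \mathcal{O}(\epsilon^2))\,\delta t + \mathcal{O}(\delta t^2)$, which vanishes in the limit $\epsilon \to 0$ for every $k \ge 3$. Plugging $a(x) = \alpha x$, $b(x) = x$ into ${\cal L} = \tfrac12 b(x)\partial_x^2 + a(x)\partial_x$ gives exactly~(\ref{fellerGenerator}).

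The only genuinely delicate point is the order in which the two limits $\delta t \to 0$ and $\epsilon \to 0$ are taken and what precisely is meant by ``the limiting generator of the process $X(t)$ as $\epsilon \to 0$''. The clean reading is that for each fixed $\epsilon > 0$ the rescaled process $X(t) = \epsilon M_\epsilon(t)$ has a generator ${\cal L}_\epsilon$ obtainable from~(\ref{aAndbLimits}) with coefficients $a_\epsilon(x) = x(\hat\lambda - \hat\mu)$ and $b_\epsilon(x) = \epsilon x(\hat\lambda + \hat\mu)$ (the state space being the lattice $\epsilon\mathbb{Z}_{\ge 0}$), and then one checks $a_\epsilon \to \alpha x$ and $b_\epsilon \to x$ pointwise as $\epsilon \to 0$ so that ${\cal L}_\epsilon g \to {\cal L}g$ for smooth compactly supported $g$; a full argument that this generator convergence upgrades to convergence of the processes would invoke standard results (e.g.\ Ethier--Kurtz), but that is beyond what the proposition asks. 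I would therefore present the moment computation as the substance of the proof and remark only briefly that the $\mathcal{O}(\delta t^2)$ error terms are uniform enough for the defining limits in~(\ref{aAndbLimits}) to make sense at each fixed $\epsilon$, and that $b_\epsilon(x) = \epsilon x(\hat\lambda + \hat\mu) \to x$ is precisely the near-critical scaling that keeps the diffusion coefficient finite while the individual birth and death rates blow up.
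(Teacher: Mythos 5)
Your proposal is correct and follows essentially the same route as the paper: compute the first, second, and $k\ge 3$ conditional moments of the increment of $X(t)=\epsilon M_\epsilon(t)$ from the transition rates~(\ref{linearBDtransitions}), substitute the expansions of $\hat\lambda(\epsilon)$ and $\hat\mu(\epsilon)$, and read off $a(x)=\alpha x$, $b(x)=x$ in~(\ref{aAndbLimits}). Your closing remarks on the order of limits and on upgrading generator convergence to process convergence go beyond what the paper states, but the substantive computation is identical.
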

\begin{proof}
With $x = \epsilon n$ consider the simultaneous limit  $\epsilon \to 0$, $n \to \infty$ at fixed $x$:
\begin{eqnarray*}
\lefteqn{\lim_{\epsilon \to 0, n \to \infty} \mathbb{E}\left[X(t + \delta t) - X(t) \mid X(t) = x \right]  }\\
	& = & \lim_{\epsilon \to 0, n \to \infty} \epsilon  (p_{n, n+1} \times 1 + p_{n, n} \times 0 + p_{n, n - 1} \times (-1)) \\
	& = & \lim_{\epsilon \to 0, n \to \infty} \epsilon \{n (\alpha + \mathcal{O}(\epsilon))\delta t + \mathcal{O}(\delta t^2)\} \\
	& = & \alpha x \delta t + \mathcal{O}(\delta t^2);  
\end{eqnarray*}
\begin{eqnarray*}
\lefteqn{\lim_{\epsilon \to 0, n \to \infty} \mathbb{E}\left[(X(t + \delta t) - X(t))^2 \mid X(t) = x \right]  }\\
	& = & \lim_{\epsilon \to 0, n \to \infty} \epsilon^2  (p_{n, n+1} \times 1 + p_{n, n} \times 0 + p_{n, n - 1} \times 1) \\
	& = & \lim_{\epsilon \to 0, n \to \infty} \epsilon^2 \{n (\epsilon^{-1} + \mathcal{O}(\epsilon))\delta t + \mathcal{O}(\delta t^2)\} \\
	& = &  x \delta t + \mathcal{O}(\delta t^2);  
\end{eqnarray*}

and for $k \ge 3$
\begin{eqnarray*}
\lefteqn{\lim_{\epsilon \to 0, n \to \infty} \mathbb{E}\left[(X(t + \delta t) - X(t))^k \mid X(t) = x \right]  }\\
	& = & \lim_{\epsilon \to 0, n \to \infty} \epsilon^k  (p_{n, n+1} \times 1 + p_{n, n} \times 0 + p_{n, n - 1} \times (-1)^k) \\
	& = & \lim_{\epsilon \to 0, n \to \infty} 
	\epsilon^k \{n \mathcal{O}(\epsilon^{-1} \delta t + \mathcal{O}(\delta t^2)\} \\ 
	& = & o(\delta t), 
\end{eqnarray*}
as $\delta t \to 0$, as required.
\end{proof}

%
%%%%%%%%%%%% Subection: Feller Diffusion as the limit of a BGW process %%%%%%%%%%%%
%
\subsection{Feller Diffusion as the limit of a BGW process}
\label{sec:BGWlimit}

Consider a BGW branching process with discrete generations $i = 0, 1, 2, \ldots$ and population size $Y(i)$ at generation $i$.  Assume the numbers 
of offspring per individual per generation are identically and independently distributed (i.i.d.) random variables $S_{ik}$, $k = 1, \ldots, Y(i)$, 
represented here by a generic random variable $S$ with $\Pr(S = 0) > 0$, $\mathbb{E}[S] =  \lambda, \Var(S) = \sigma^2$, and finite moments 
to all orders\footnote{
For certain applications, such as studying the coalescence of finite samples, the weaker assumption that $S$ has finite first and second moments 
will suffice \citep{Harris20}.
}.  
Then $Y(i + 1) = \sum_{k = 1}^{Y(i)} S_{ik}$.  We also assume that the process begins with a specified population size $Y(0) = y_0$.  

More specifically, consider the evolution of the subpopulation 
$\{M(i)\}_{i \ge 0},$
descended from a subset of of size 
$m_0 \le y_0$ of the initial population.  A diffusion limit is obtained for a process beginning at $M(0) = m_0$, $Y(0) = y_0$ by defining a time 
$t \in \mathbb{R}_{\ge 0}$ and scaled population $X(t)$ such that   
\begin{equation}	\label{BGWdiffLimit}
t = \frac{\sigma^2 i}{y_0}
\mbox{ for } i\in \mathbb{Z}_{\ge 0}, \quad X(t) = \frac{M(i)}{y_0}, 
\end{equation}
and by taking the limit 
$m_0, y_0 \rightarrow \infty$, $\lambda \rightarrow 1$, and $\sigma^2$ fixed, in such a way that 
\begin{equation}	\label{alphaDef}
\alpha := \frac{y_0 \log \lambda}{\sigma^2} \quad\text{ and }\quad x_0 := \frac{m_0}{y_0}
\end{equation}
remain fixed.  
Equivalently, we require that the mean number of offspring per parent is $\lambda = 1 + \alpha\sigma^2/y_0 + o(1/y_0)$ as $y_0 \to \infty$.   
%
%%	Proposition 2
%
\begin{proposition}\label{proposition2}
The limiting generator of the BGW process $X(t)$ with initial condition $X(0) = x_0$, as described above, is the Feller diffusion generator Eq.~(\ref{fellerGenerator}).  
\end{proposition}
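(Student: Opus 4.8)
The plan is to follow the same strategy as the proof of Proposition~\ref{proposition1}: instead of verifying Feller's forward equation directly, I would check that the limiting process meets the three infinitesimal-moment conditions of Eq.~(\ref{aAndbLimits}) and then read off $a(x)=\alpha x$ and $b(x)=x$, which by the generator formula is Eq.~(\ref{fellerGenerator}). The one structural novelty relative to the birth--death case is that, under the time scaling of Eq.~(\ref{BGWdiffLimit}), an infinitesimal increment $\delta t$ corresponds to $\delta i=y_0\delta t/\sigma^2$ generations of the underlying discrete chain (read $\lfloor y_0\delta t/\sigma^2\rfloor$ if one insists on integer generations; the rounding error is $\mathcal{O}(1/y_0)$ and vanishes in the limit), so the computation is driven by $\delta i$-step moments of a BGW process rather than by one-step rates. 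One could alternatively invoke a general diffusion-approximation theorem, but verifying the moment conditions keeps the argument self-contained and parallel to Proposition~\ref{proposition1}.

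First I would record the classical $\delta i$-step moments of the subpopulation chain $\{M(i)\}$ conditioned on $M(i)=m$: the mean is $m\lambda^{\delta i}$; the variance is $m\sigma^{2}\lambda^{\delta i-1}(\lambda^{\delta i}-1)/(\lambda-1)$, with the $\alpha=0$, $\lambda=1$ case given by the limit $m\,\delta i\,\sigma^{2}$; and, using the assumed finiteness of all moments of $S$, the $j$-th cumulant of the increment $M(i+\delta i)-M(i)$ is bounded by a constant (depending on $j$ and the offspring moments) times $m$ times a polynomial of degree $j-1$ in $\delta i$. Next I would substitute $m=xy_0$, $\lambda=1+\alpha\sigma^{2}/y_0+o(1/y_0)$ and $\delta i=y_0\delta t/\sigma^{2}$ from Eqs.~(\ref{BGWdiffLimit}) and~(\ref{alphaDef}), divide by $y_0$ (an increment of $X$ being an increment of $M$ divided by $y_0$), and let $y_0\to\infty$ with $x$ and $\delta t$ fixed. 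Since $\lambda^{\delta i}\to e^{\alpha\delta t}$, this gives $\mathbb{E}[X(t+\delta t)-X(t)\mid X(t)=x]\to x(e^{\alpha\delta t}-1)$ and $\mathrm{Var}[X(t+\delta t)-X(t)\mid X(t)=x]\to(x/\alpha)e^{\alpha\delta t}(e^{\alpha\delta t}-1)$ (the right-hand side being $x\delta t$ when $\alpha=0$), while the polynomial-in-$\delta i$ cumulant bounds force every central moment of order $j\ge3$ of the $X$-increment to converge to a function that is $\mathcal{O}(\delta t^{2})$. Re-assembling ordinary moments from central moments and expanding about $\delta t=0$ then yields $\alpha x\,\delta t+o(\delta t)$ for the first moment, $x\,\delta t+o(\delta t)$ for the second, and $o(\delta t)$ for all moments of order $\ge3$ --- exactly the hypotheses of Eq.~(\ref{aAndbLimits}), which identify the generator as Eq.~(\ref{fellerGenerator}).

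I expect the main obstacle to be the higher-moment bookkeeping in the regime $\delta i\to\infty$. In Proposition~\ref{proposition1} a single birth--death step already contributes at order $\delta t$ and all higher increments are trivially $\mathcal{O}(\delta t^{2})$; here the moments of order $\ge3$ accumulate over $\delta i=\mathcal{O}(y_0)$ steps, and one must check that dividing by $y_0^{j}$ still beats that growth --- which is precisely where the hypothesis that $S$ has finite moments of all orders enters (with only the first two moments one still controls the mean and variance, enough for the finite-sample coalescent results below but not for a full identification of the generator). A secondary, purely technical point is the interchange of the limits $y_0\to\infty$ and $\delta t\to0$ together with the integer-generation rounding; this is dealt with by noting that the $\mathcal{O}(\cdot)$ estimates above are uniform in $\delta t$ on a neighbourhood of $0$.
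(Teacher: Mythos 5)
Your proposal is correct, but it takes a more roundabout route than the paper does. The paper does not verify the conditions of Eq.~(\ref{aAndbLimits}) for a fixed time increment $\delta t$; instead it identifies $\delta t$ with exactly one generation, $\delta t = \sigma^2/y_0$, so that $\delta t \to 0$ automatically as $y_0 \to \infty$ and only the \emph{one-step} offspring moments are needed: $\mathbb{E}[\Delta M \mid M=m] = m(\lambda-1) = \alpha x\,\delta t\, y_0/y_0 + o(\delta t)$ after dividing by $y_0$, $\mathbb{E}[(\Delta M)^2 \mid M=m] = m\sigma^2 + O(m^2(\lambda-1)^2)$ giving $x\,\delta t + o(\delta t)$, and the higher central moments of a sum of $m = xy_0$ i.i.d.\ offspring counts are $O(m^{k/2})$, hence $o(\delta t)$ after dividing by $y_0^k$. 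This is a near-verbatim parallel of Proposition~\ref{proposition1} and is essentially a three-line computation (the paper in fact only sketches it, deferring details to Cox and Miller and to Burden and Simon 2016). You instead hold $\delta t$ fixed, let $\delta i = y_0\delta t/\sigma^2 \to \infty$, and invoke the full $\delta i$-step mean and variance formulas together with near-critical cumulant growth bounds ($j$-th cumulant $\lesssim m\,\delta i^{\,j-1}$) before expanding in $\delta t$ at the end. Your computations are right — $x(e^{\alpha\delta t}-1)$ and $(x/\alpha)e^{\alpha\delta t}(e^{\alpha\delta t}-1)$ do reduce to $\alpha x\,\delta t + o(\delta t)$ and $x\,\delta t + o(\delta t)$, and the cumulant bookkeeping closes as you describe — and your version has the virtue of making the order of limits ($y_0\to\infty$ first, then $\delta t\to 0$) explicit, at the cost of needing the multi-generation moment formulas and a uniform-in-$\lambda$ cumulant estimate that the paper's one-generation identification avoids entirely. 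Your remark that finite first and second moments of $S$ already control the mean and variance (and hence suffice for the sample-coalescent applications) but not the full generator identification matches the paper's own footnote.
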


That the Feller diffusion can be obtained as the limit of a BGW process is a classical result \citep[Example~5.9, p235]{Cox78}.
A proof consists of confirming that the functions $a$ and $b$ in Eq.~(\ref{aAndbLimits}),
with $\delta t = \sigma^2/y_0$ corresponding to an increment of one generation,
have the required limits.  
For a details in the context of the forward Kolmogorov equation with slightly different notational conventions see \citet[Section~2]{burden2016genetic}. 

%
%%%%%%%%%%%% Subsection: Feller Diffusion as the limit of a BD process %%%%%%%%%%%%
%
\subsection{Solution to the Feller Diffusion with initial condition $X(0) = x_0$}
\label{sec:FellerSolution}

\citet[][Lemma~9]{feller1951two} gives the solution for the density of $X(t)$ for a given initial condition.  The method of solution involves 
solving for the Laplace transform by integrating along characteristics, and is set out in detail in \citet[][Section~5.11]{Cox78}.  The density 
of $X(t)$ corresponding to the initial condition $X(0) = x_0$ is
\begin{eqnarray}	\label{FellersPoissonGammaMixture}
f_{X(t)}(x; \alpha, x_0) & = &  \mathcal{P}(0; x_0 \mu(t; \alpha))\delta(x) \\
	& & +\, \sum_{l = 1}^\infty \mathcal{P}(l; x_0 \mu(t; \alpha)) \frac{x^{l - 1}}{\beta(t; \alpha)^l (l - 1)!} e^{-x/\beta(t; \alpha)}, \quad x \ge 0,  \nonumber
\end{eqnarray}
where $\delta(x)$ is the Dirac delta function, 
$$
\mathcal{P}(l; \mu) =e^{-\mu} \frac{\mu^l}{l!} , \qquad l = 0, 1, 2, \ldots,
$$
and 
\begin{equation}	\label{muBetaDef}
\mu(t; \alpha) = \frac{2\alpha e^{\alpha t}}{e^{\alpha t} - 1}, \qquad \beta(t; \alpha) = \frac{e^{\alpha t} - 1}{2\alpha}.  
\end{equation}
We set $\mu(t; 0) = 2/t$ and $\beta(t; 0) =  t/2$.  
The expansion represents a Poisson-Gamma mixture, with an atom at zero of mass $\exp(-x_0 \mu(t; \alpha))$, the probability the population of 
extinction by time $t$, plus a continuous part for $x > 0$.  

The asymptotic limit as $t \to \infty$ of the atom at zero can be confirmed directly from a classical result stated in 
\citet[Theorem~1]{Athreya_1972} for the eventual extinction probability of a BGW process with a single founder.  In the notation of Section~\ref{sec:BGWlimit},  
for the process $\{M(i)\}_{i \ge 0}$ with $M(0) = m_0$, the eventual extinction probability is 
\begin{equation}	\label{BGWExtinction}
\mathbb{P}(M(\infty) = 0 \mid M(0) = m_0) = \zeta^{m_0}, 
\end{equation}
where $\zeta$ is the smallest non-negative root of the iterative equation $\zeta = G_S(\zeta) = \mathbb{E}[e^{\zeta S}]$.  
Moreover, this extinction probability is $< 1$ or $=1$ according as $\lambda = \mathbb{E}[S]$ is $> 1$ (supercritical case) or $\le 1$ 
(critical or sub-critical cases).   To obtain the near-critical limit of Proposition~\ref{proposition2}, note that the moment generating function satisfies 
\[
G_S(1) =1, \quad G'_S(1) = \lambda, \quad \text{and } G''_S(1) = \sigma^2 + \lambda^2 - \lambda > 0, 
\]
and hence 
\[
\zeta = 1 + \lambda(\zeta - 1) + \tfrac{1}{2} (\sigma^2 + \lambda^2 - \lambda)(\zeta - 1)^2 + O\left((\zeta - 1)^3\right), 
\]
as $\zeta \to 1$.
Solving for the smallest non-negative root, and noting that $\zeta$ is close to 1 for $\lambda$ close to 1, we have either $\zeta = 1$ 
if $\lambda \le 1$, or
\begin{eqnarray*}
\zeta & = & 1 + \frac{2(1 - \lambda)}{\sigma^2} + O\left( (\lambda - 1)^2 \right) \nonumber \\
	& = & 1 - \frac{2\log\lambda}{\sigma^2} + O\left( (\log\lambda)^2 \right) \nonumber \\
	& = & 1 - \frac{2\alpha}{y_0} + O\left( \frac{1}{y_0^2} \right), 
\end{eqnarray*}
if $\lambda > 1$, using Eq.~(\ref{alphaDef}) in the last line.  From Proposition~\ref{proposition2} and Eq.~(\ref{BGWExtinction}), the probability of 
eventual extinction for a supercritical Feller diffusion is  
\[
\lim_{y_0 \rightarrow \infty} \left(1 - \frac{2\alpha}{y_0} \right)^{x_0 y_0} = e^{-2\alpha x_0}, 
\]
agreeing with asymptotic limit of the point mass in Eq.~(\ref{FellersPoissonGammaMixture}) for the $\alpha > 0$ case.  The $\alpha \le 0$ case 
corresponds to the $\lambda \le 1$ stable solution.  

The interpretation of the continuous part of Eq.~(\ref{FellersPoissonGammaMixture}) is that the population at time $t$ is composed of independent 
families descended from $L = l$ founders at time $0$, where $L$ is Poisson with mean $x_0\mu(t; \alpha)$.
The size of each family is an independent exponential random variable with means $\beta(t; \alpha)$, 
as observed by \citet[Theorem~2.1(ii)]{o1995genealogy}.  
See also \citet{Harris20}.  
Fig.~\ref{fig:FellerSolution} represents a typical ancestry for a non-extinct population $X(t)$ with $l = 6$ ancestors. 

To understand the origin of this interpretation, consider the limiting process of a BGW process described in Subsection~\ref{sec:BGWlimit}.  The density of the 
distribution of the descendants of a single individual in the original population is approximated to $\mathcal{O}(1/y_0)$ by setting $m_0$ to 1 in Eq.~(\ref{alphaDef}) and hence 
$x_0 = 1/y_0$ in Eq.~(\ref{FellersPoissonGammaMixture}):
\begin{equation} \label{singleFamilyDensity}
f_\text{1-family}(x, t) = 
	\left(1 - \frac{\mu(t; \alpha)}{y_0} \right) \delta(x) + \frac{\mu(t; \alpha)}{y_0} \frac{1}{\beta(t; \alpha)} e^{-x/\beta(t; \alpha)} + {\cal O} \left(\frac{1}{y_0^2} \right) ,
\end{equation}
as $y_0 \rightarrow \infty$.  The probability of extinction of a given family is $1 - \mu(t; \alpha)/y_0 + {\cal O}(1/{y_0^2} )$, and 
the number of surviving families descended from an initial subpopulation of size $m_0$ at time $t$ as $m_0, y_0 \to \infty$ is therefore distributed as 
\begin{equation}	\label{NoOfFamiliesPois}
\text{Binom}\left(m_0, \frac{\mu(t; \alpha)}{y_0} + {\cal O} \left(\frac{1}{y_0^2} \right) \right) \rightarrow \text{Pois}(x_0 \mu(t; \alpha)) 
							\quad \text{as } y_0 \to \infty.  
\end{equation}
Eq.~(\ref{singleFamilyDensity}) also implies that, conditional on its survival, the limit in distribution of any family's size is exponential 
with rate $1/\beta(t; \alpha)$ as $y_0 \rightarrow \infty$.  

Since families evolve independently, conditional on precisely $l \ge 1$ families 
surviving to time $t$, the total population size is thus Gamma-distributed with rate parameter $1/\beta(t)$ and 
shape parameter $l$.  The interpretation of Eq.~(\ref{FellersPoissonGammaMixture}) for general $x_0$ as a sum over the number 
$l \ge 0$ of founders of the final population follows.  

As an interesting aside, we note that for the critical case, $\alpha = 0$, the classical \citet{yaglom1947certain} theorem for the quasi-stationary limit of 
a critical BGW process is implicit in Eq.~(\ref{FellersPoissonGammaMixture}).  In the notation of Section~\ref{sec:BGWlimit}, Yaglom's theorem states 
that \citep[see][Theorem~2]{Athreya_1972} in the critical case $\lambda = 0$, for which eventual extinction of the population is almost certain, 
$\lim_{i \to \infty} \mathbb{P}(Y(i)/i > z \mid Y(i) > 0) = e^{-2z/\sigma^2}$.  
In \citet[p191]{BurdenGriffiths23} it is shown that for a critical Feller diffusion, only the $l = 1$-family contribution survives the quasi-stationary limit, 
and that $\lim_{t \to \infty} \mathbb{P}(X(t)/t > w \mid X(t) > 0) = e^{-2w}$.  

For a general $\alpha \in \mathbb{R}$, the Laplace transform of $X(t)$ is 
\begin{eqnarray}	\label{expansion:0a}
\psi (\phi;t;\alpha,x_0) & = & \mathbb{E} \left[e^{-\phi X(t)} \right] \nonumber \\
	& = & \sum_{l=0}^\infty \mathcal{P}(l; x_0 \mu(t; \alpha))
				\left( 1 + \phi \beta(t;\alpha)\right)^{-l} \nonumber \\
	& = & \exp \left\{-x_0\mu(t;\alpha) \left(1 - \left( 1 + \phi \beta(t;\alpha)\right)^{-1}\right) \right\} \nonumber \\
	& = & \exp \left\{-\frac{\phi x_0 \mu(t;\alpha)\beta(t;\alpha)}{1+\phi \beta(t;\alpha)} \right\}. 
\end{eqnarray}

%
%%%%%%%%%%%% Section: Coalescence %%%%%%%%%%%%
%
\section{Coalescence}
\label{sec:Coalescence}

Let $A_n(s;t)$ be the number of ancestors of a sample of $n$ taken at the ``current" time $t$, at time $s$ back from the present.
$A_\infty(s;t)$ refers to the ancestors of the population (Seef Fig.~\ref{fig:AnDefinition}).  
$A_\infty(t;t)$ is the number of ancestors at time zero of the population at time $t$ and has a Poisson$(x_0 \mu(t;\alpha))$ distribution.

%%%%%%%%%%%% Figure: definitions of A_\infty and A_n  %%%%%%%%%%%%
\begin{figure}[t]
\begin{center}
\centerline{\includegraphics[width=1\textwidth]{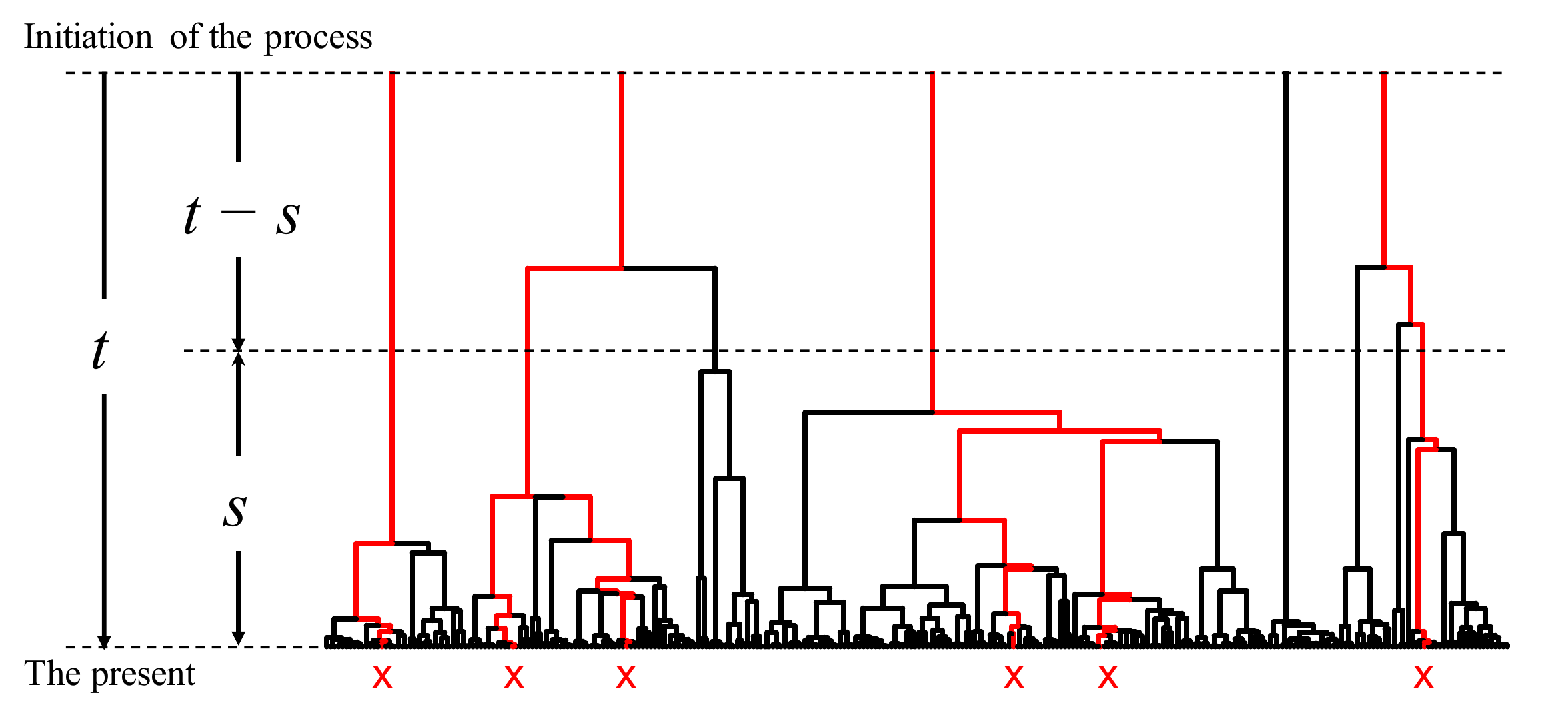}} % Plot constructed using <RRP_tree v4.R> 
\caption{Number of ancestors of a sample of size $n$, $A_n(s; t)$, at time $t - s$.  In this example, the number of ancestors of a sample 
of size $n = 6$, shown in red, is $A_6(s, t) = 4$.  The number of ancestors of the entire population is $A_\infty(s; t) = 8$. } 
\label{fig:AnDefinition}
\end{center}
\end{figure}

%
%%%%%%%%%%%% SubSection: Population Coalescence %%%%%%%%%%%%
%
\subsection{Population Coalescence}
\label{sec:PopCoalescence}

Our initial aim is to calculate the distribution of  $A_\infty(s;t)$.  We begin with the following definition.  
% Definition 1
\begin{definition} \citep[p378]{Johnson93} If $N \sim \text{Pois}(\nu)$ is a Poisson random variable with mean $\nu$ and probability generating function (pgf) 
$G_N(z)= e^{-\nu(1 - z)}$ and $M_i \sim \text{ShiftedGeom}(p)$, $0 \le p \le 1$, $i = 1, 2, \ldots$, are i.i.d.\ shifted geometric random variables, independent of $N$, 
with common probability function $\mathbb{P}(M = m) = (1 - p)p^{m - 1}$, $m = 1, 2, \ldots$ and pgf $G_M(z) = (1 - p)z(1 - pz)^{-1}$, then the random variable 
\[
Q = \sum_{i = 0}^N M_i, 
\]
where $M_0 := 0$, is said to be a \emph{geometric compound Poisson}  or \emph{P\'olya-Aeppli} random variable with parameters $(\nu, p)$.  
\end{definition}
The range of $Q$ is $\{0, 1, 2, \ldots\}$, with $\mathbb{P}(Q = 0) = \mathbb{P}(N = 0) = e^{-\nu}$.  
The corresponding pgf is 
\begin{equation}	\label{PApgf}
G_Q(z) = G_N(G_M(z)) = \exp \left\{ -\nu \frac{1 - z}{1 - pz} \right\}.
\end{equation}
%
%%	Theorem: distribution of A_\infty(s; t) = Polya-Aeppli
%
\begin{theorem}	\label{thm:PolyaAeppli}
$A_\infty(s;t)$ is a P\'olya-Aeppli random variable with parameters 
\begin{equation}	\label{nuandpDef}
\nu = x_0 \mu(t; \alpha) = \frac{2\alpha x_0 e^{\alpha t}}{e^{\alpha t} - 1}, \qquad p = p(s, t; \alpha) = \frac{e^{\alpha s} - e^{\alpha t}}{1 - e^{\alpha t}}.  
\end{equation}
\end{theorem}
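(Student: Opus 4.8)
The plan is to compute the probability generating function of $A_\infty(s;t)$ directly and recognise it as the P\'olya--Aeppli pgf in Eq.~(\ref{PApgf}) with the claimed parameters. First I would decompose the population at the present time $t$ according to its $L$ founders at time $0$, where by Eq.~(\ref{FellersPoissonGammaMixture}) and the discussion following it, $L\sim\text{Pois}(x_0\mu(t;\alpha))$, so $\nu = x_0\mu(t;\alpha)$ is forced. Conditional on $L=l$, the $l$ founding families evolve as i.i.d.\ Feller diffusions each started from an infinitesimal mass, conditioned to survive to time $t$; the ancestors at time $t-s$ of the whole population are the union over these $l$ families of the ancestors-at-time-$t-s$ of each surviving family. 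Hence $A_\infty(s;t) = \sum_{i=1}^{L} K_i$, where $K_i$ is the number of time-$(t-s)$ ancestors of family $i$, and the $K_i$ are i.i.d.\ and independent of $L$ (each $K_i\ge 1$ since each family is conditioned to survive to $t$, so it has at least one ancestor at every intermediate time). So it suffices to show $K_i$ is a shifted geometric with parameter $p=p(s,t;\alpha)$; then $G_{A_\infty}(z) = G_L(G_K(z))$ gives exactly Eq.~(\ref{PApgf}).

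To identify the law of $K = K_i$, I would use the branching (independence of sub-families) property at the intermediate time $t-s$. The number of ancestors at time $t-s$ of a single surviving family equals the number of founders at time $t-s$ whose descendant sub-family survives the remaining time $s$. Writing the family's state at time $t-s$ via Eq.~(\ref{FellersPoissonGammaMixture}) with the appropriate initial-mass normalisation, the number of time-$(t-s)$ founders is itself Poisson with a mean proportional to $\mu(t-s;\alpha)$, and each such sub-founder independently survives the further time $s$ with the single-family survival probability $\mu(s;\alpha)/y_0 + O(y_0^{-2})$ extracted from Eq.~(\ref{singleFamilyDensity}). Thinning a Poisson by this probability, then conditioning on the total being $\ge 1$ (the family survives to $t$), yields a \emph{zero-truncated Poisson}; but one must be careful, because the sub-founders are themselves correlated through the requirement that at least one lineage reaches $t$. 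The cleaner route is to work with Laplace transforms / pgfs throughout: let $\Phi(z;\tau)$ be the pgf of the number of time-$\tau$-ago ancestors of a single family conditioned to survive time $\tau$ (before further conditioning), use the semigroup relation obtained by composing the map over $[0,t-s]$ with that over $[t-s,t]$, and extract $G_K(z)$ from this composition. Carrying out that composition with the explicit $\mu,\beta$ of Eq.~(\ref{muBetaDef}) should collapse, after simplification, to $G_K(z) = (1-p)z(1-pz)^{-1}$ with $p$ as in Eq.~(\ref{nuandpDef}); verifying $p = \frac{e^{\alpha s}-e^{\alpha t}}{1-e^{\alpha t}}$ is the key algebraic check, and one should sanity-check the boundary cases $s=t$ (giving $p=0$, so $A_\infty(t;t)=L\sim\text{Pois}(\nu)$, consistent with the remark preceding the theorem) and $s\to 0$ (giving $p\to 1$, reflecting ``coming down from infinity'').

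An alternative, perhaps more robust, derivation avoids the infinitesimal-family bookkeeping: take the BD-process approximation of Proposition~\ref{proposition1} (or the BGW approximation of Proposition~\ref{proposition2}), compute the ancestral counts exactly in the discrete model, and pass to the limit. In a linear birth--death process the number of time-$\tau$-ago ancestors of the population at time $t$, conditioned appropriately, is classically a (possibly zero-inflated) geometric mixture, and the reconstructed-process machinery cited in the introduction gives the relevant probabilities in closed form; taking $\epsilon\to 0$ under the scaling $\hat\lambda,\hat\mu\to\infty$, $\hat\lambda-\hat\mu=\alpha$ should reproduce $(\nu,p)$. I would likely present whichever of the two is shorter and relegate the other to a remark.

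\textbf{Main obstacle.} The delicate point is justifying that, conditional on $L=l$, the per-family ancestor counts $K_i$ are genuinely i.i.d.\ \emph{and} that each $K_i$ is exactly shifted-geometric rather than merely zero-truncated-Poisson-like: this requires the right conditioning (survival to $t$, not to $t-s$) and a careful composition of the two time-intervals' generating-function maps. Equally, the $o(1/y_0)$ error terms in Eqs.~(\ref{singleFamilyDensity})--(\ref{NoOfFamiliesPois}) must be controlled uniformly so that the limiting pgf is exactly Eq.~(\ref{PApgf}); handling those error terms, or else bypassing them entirely via a direct diffusion-level argument using only Eqs.~(\ref{FellersPoissonGammaMixture})--(\ref{expansion:0a}) and the branching property, is where the real work lies.
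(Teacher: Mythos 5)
Your strategy is viable but genuinely different from the paper's, and as written it stops exactly where the content of the theorem lies. The paper does not decompose by founders at time $0$ at all: it conditions on the population size $X(t-s)=x$ at the intermediate time, observes from Eq.~(\ref{NoOfFamiliesPois}) that $A_\infty(s;t)\mid X(t-s)=x$ is Poisson with mean $x\mu(s;\alpha)$, and then reads off the pgf as $G_{A_\infty(s;t)}(z)=\psi(\mu(s;\alpha)(1-z);t-s;\alpha,x_0)$ using the closed-form Laplace transform (\ref{expansion:0a}); a two-line algebraic simplification with the explicit $\mu,\beta$ of (\ref{muBetaDef}) then produces the P\'olya--Aeppli pgf with the stated $\nu$ and $p$. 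That route requires no i.i.d.\ per-family bookkeeping, no discrete approximation, and no control of $o(1/y_0)$ errors beyond what is already packaged in (\ref{NoOfFamiliesPois}) and (\ref{expansion:0a}), so the ``main obstacle'' you identify largely evaporates.

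The gap in your version is that the shifted-geometric law of $K_i$ with the specific parameter $p(s,t;\alpha)$ --- which \emph{is} the theorem, once $\nu=x_0\mu(t;\alpha)$ is granted --- is never derived; you only assert that the composition of generating-function maps ``should collapse.'' You also correctly sense that naive Poisson thinning of the sub-founders at $t-s$ gives the wrong answer (a zero-truncated Poisson), but you do not resolve the issue. The resolution is to mix over the random family size at $t-s$: conditional on a founding family surviving to $t-s$, its size there is $\mathrm{Exp}(1/\beta(t-s;\alpha))$; given that size $x$, its number of ancestors at $t-s$ of its time-$t$ descendants is Poisson$(x\mu(s;\alpha))$; the exponential mixture of Poissons is geometric on $\{0,1,2,\dots\}$ with parameter $q=\mu(s;\alpha)\beta(t-s;\alpha)/\left(1+\mu(s;\alpha)\beta(t-s;\alpha)\right)$, and conditioning on survival to $t$ (i.e.\ on the count being positive) turns this into $\mathrm{ShiftedGeom}(q)$. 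A short computation gives $q=(e^{\alpha s}-e^{\alpha t})/(1-e^{\alpha t})=p(s,t;\alpha)$, and one checks consistency of the thinning: $x_0\mu(t-s;\alpha)\,q\cdot q^{-1}\,\mathbb{P}(K\ge 1)=x_0\mu(t;\alpha)$, so the number of families surviving to $t$ is indeed $\mathrm{Pois}(x_0\mu(t;\alpha))$. With that paragraph supplied (or, more simply, by switching to the paper's conditioning on $X(t-s)$), your argument closes; without it, the proof is an outline rather than a proof. Your boundary checks at $s=t$ and $s\to0$ are correct and worth keeping.
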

\begin{proof}
Noting 
from Eq.~(\ref{NoOfFamiliesPois})
that, conditional on a scaled population $x$ at a time $s$ in the past, the number of  families descendant 
from founder lineages at $t - s$ is Poisson with mean $x\mu(s;\alpha)$, 
\begin{eqnarray*}	%\label{popanc:00}
\mathbb{P}\left(A_\infty(s;t)=k \right)
	& = & \int_0^\infty\mathbb{P}\left(A_\infty(s;t)=k, X(t-s) \in (x,x+dx)\right) \nonumber \\
	& = & \int_0^\infty\mathcal{P}\left(k; x\mu(s;\alpha)\right) f_{X(t-s)}(x; \alpha, x_0)dx.
\end{eqnarray*}
The pgf of $A_\infty(s; t)$ is, from Eq.~(\ref{expansion:0a}),  
\begin{eqnarray*}	%\label{pgf:00}
G_{A_\infty(s;t)}(z)
	& = & \int_0^\infty e^{-\mu(s;\alpha)x(1 - z)} f_{X(t - s)}(x; \alpha, x_0)dx \nonumber \\
	& = & \psi(\mu(s;\alpha)(1 - z), t - s; \alpha, x_0) \nonumber \\
	& = & \exp \left\{-\frac{\mu(s;\alpha)(1 - z) x_0 \mu(t - s;\alpha)\beta(t - s; \alpha)}{1 + \mu(s; \alpha)(1 - z) \beta(t - s;\alpha)}\right\} \nonumber \\
	& = & \exp \left\{ \frac{2\alpha x_0 e^{\alpha t}(1 - z)}{1 - e^{\alpha s} + (e^{\alpha s}-e^{\alpha t})(1 - z) } \right\} \nonumber \\
	& = & \exp\left\{ -x_0\mu(t; \alpha) \frac{1 - z}{1 - p(s, t; \alpha)z}  \right\}, 
\end{eqnarray*}
which is of the form of Eq.~(\ref{PApgf}).  
\end{proof}

It will be convenient below to consider the pgf of $A_\infty(s;t)$ conditional on $X(t) > 0$, i.e.\ on survival of the population at time $t$.  An equivalent condition is 
$A_\infty(s;t) > 0$ since the population at time $t$ is not extinct if and only if it has ancestors at time $t - s$.   
Since the {\em pgf} of any random variable $Q$ with 
range $\{0, 1, 2, \ldots\}$ is $G_Q(z) = \sum_{n = 0}^\infty  \mathbb{P}(Q = n) z^n$, the {\em pgf} of $Q$ conditioned on $Q > 0$ is 
\[
G_{Q\mid Q>0} (z) = \sum_{n = 1}^\infty  \mathbb{P}(Q = n \mid Q > 0) z^n = \frac{G_Q(z) - G_Q(0)}{1 - G_Q(0)}. 
\]
Hence
\begin{eqnarray}	\label{pgfAinfSurvival}
G_{A_\infty(s;t) \mid X(t) > 0}(z) & = & G_{A_\infty(s;t) \mid A_\infty(s;t) > 0}(z) \nonumber \\
& = & \frac{G_{A_\infty(s;t)}(z) - G_{A_\infty(s;t)}(0)}{1 - G_{A_\infty(s;t)}(0)} \nonumber \\
& = & \frac{\exp\left\{ -x_0\mu(t; \alpha) \frac{1 - z}{1 - p(s, t; \alpha)z}  \right\} - \exp\left\{ -x_0\mu(t; \alpha)\right\}}
				{1 - \exp\left\{ -x_0\mu(t; \alpha)\right\}}. \nonumber\\
\end{eqnarray}

%
%%%%%%%%%%%% Subsection: Sample Coalescence %%%%%%%%%%%%
%
\subsection{Sample Coalescence}
\label{sec:SampleCoalescence}

To calculate the coalescent distribution for a sample of size $n$ in terms of the coalescent distribution of the population we begin with the following 
lemma.  
%
%	lemma 1
%
\begin{lemma} \label{lemma:probAnGivenAinf}
Conditional on $A_\infty(s; t) = k > 0$, the probability that a sample of size $n > 0$ taken at time $t$ has $j \le k$ ancestors at time $t - s$ is 
\[
\mathbb{P}(A_n(s; t) = j \mid A_\infty(s; t) = k) = {k \choose j} \frac{n!}{k_{(n)}} {n - 1 \choose j - 1}, \qquad 1 \le j \le k, 
\]
where $k_{(n)} = k(k + 1)\cdots(k + n - 1)$ is the rising factorial.  
The distribution of $A_n(s;t)$ conditional on $A_\infty(s;t)$ is independent of the total population size at $t$.
\end{lemma}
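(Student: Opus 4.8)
The plan is to exploit the interpretation of the Feller diffusion population as a union of independent families, each founded by one of the $A_\infty(s;t)$ ancestral lineages present at time $t-s$. Conditional on $A_\infty(s;t)=k$, the population at time $t$ splits into $k$ independent families whose sizes, conditional on all being positive, are i.i.d.\ with the same distribution (by the exchangeability of the construction in Eqs.~(\ref{singleFamilyDensity})--(\ref{NoOfFamiliesPois}), each surviving family's scaled size is exponential with rate $1/\beta(s;\alpha)$). A sample of size $n$ is drawn uniformly at random from the infinite population at time $t$; because the population is a continuum, ``uniform sampling'' means each of the $n$ sampled individuals lands in family $i$ with probability proportional to that family's size, independently across the $n$ individuals. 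Then $A_n(s;t)=j$ means the sample hits exactly $j$ of the $k$ families.

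First I would set up the combinatorial model: given family sizes (which, after conditioning on survival, are i.i.d.\ continuous weights $W_1,\dots,W_k$), the $n$ sample points are allocated by a multinomial-type mechanism with random probabilities $W_i/\sum W_i$. The event $\{A_n(s;t)=j\}$ is the event that the set of occupied families has size $j$. Second, I would integrate out the family sizes. The key observation is that because the $W_i$ are i.i.d.\ exponential (equivalently Gamma$(1,\cdot)$), the resulting allocation of $n$ labelled balls into $k$ labelled boxes is exactly the Dirichlet–multinomial / Pólya-urn allocation with all parameters equal to $1$: the probability that a prescribed ordered sequence of $n$ draws produces occupancy counts $(n_1,\dots,n_k)$ with $\sum n_i = n$ is
\[
\frac{\prod_{i=1}^k (n_i)! \big/ \text{(appropriate normalisation)}}{k_{(n)}},
\]
more precisely the Ewens/Pólya formula giving probability $\dfrac{n!}{k_{(n)}}\,\dfrac{1}{\prod_i n_i!}\prod_i n_i!$ — that is, all compositions are handled by the rising factorial $k_{(n)}=k(k+1)\cdots(k+n-1)$ in the denominator. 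Third, I would count: choosing which $j$ of the $k$ families are occupied gives $\binom{k}{j}$; then the number of ways to assign the $n$ (distinguishable) sample members to these $j$ families so that none is empty, weighted correctly, must collapse to $n!\binom{n-1}{j-1}/k_{(n)}$ after summing the Dirichlet–multinomial weights over all strictly positive compositions $(n_1,\dots,n_j)$ of $n$. The identity
\[
\sum_{\substack{n_1+\cdots+n_j=n\\ n_i\ge 1}} 1 = \binom{n-1}{j-1},
\]
together with the fact that each such composition carries equal weight $n!/k_{(n)}$ under the unit-parameter Dirichlet–multinomial (the $\prod n_i!$ factors cancel), yields the claimed formula $\binom{k}{j}\frac{n!}{k_{(n)}}\binom{n-1}{j-1}$.

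The main obstacle is justifying rigorously that sampling $n$ individuals ``uniformly'' from the continuum population at time $t$ induces exactly the size-biased multinomial allocation over families, and then that integrating the random Dirichlet weights out gives the unit-parameter Dirichlet–multinomial law — i.e.\ pinning down why the family-size distribution being exponential is precisely what makes every composition equiprobable. I would handle this either by passing to the prelimit BGW process (where the finite population of size $\approx \beta(s;\alpha)y_0$ per surviving family makes ``uniform sample'' literal and the Dirichlet–multinomial emerges from a Pólya urn in the $y_0\to\infty$ limit), or directly: conditional on $W_1,\dots,W_k$ the allocation is $\mathrm{Multinomial}(n; W_1/\!\sum W, \dots)$, and since $(W_1,\dots,W_k)/\!\sum W_i \sim \mathrm{Dirichlet}(1,\dots,1)$ independently of $\sum W_i$, the mixture is Dirichlet–multinomial with all parameters $1$; a short computation of its occupancy distribution gives the result. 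Finally, independence of this conditional law from the total population size at $t$ is immediate from the derivation, since $\sum W_i$ (hence the total size) never enters the occupancy probabilities once we condition on $k$.
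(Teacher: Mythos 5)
Your proposal is correct and follows essentially the same route as the paper's proof: the relative family sizes of the $k$ ancestral families are Dirichlet$(\mathbf{1}_k)$ because the individual sizes are i.i.d.\ exponential, so the sample occupancy vector is Dirichlet-multinomial$(n,\mathbf{1}_k)$, i.e.\ uniform with probability $n!/k_{(n)}$ over compositions of $n$ into $k$ nonnegative parts, and counting the compositions with exactly $j$ nonzero parts gives ${k \choose j}{n-1 \choose j-1}$. The only difference is presentational: you dwell on justifying the size-biased multinomial allocation and the Dirichlet mixture, which the paper simply asserts with a citation.
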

\begin{proof}
Since at a given time each family size within the population descended from an individual is i.i.d.\ exponentially distributed, 
the distribution of relative family sizes of the population at time $t$ descended from the $k$ ancestors at time $t - s$ 
is Dirichlet$(\mathbf{1}_k)$, where the vector $\mathbf{1}_k = (1, \ldots, 1)$ has $k$ entries, independent of the total population size.  Hence the distribution of family 
sizes $(N_1, \ldots, N_k)$ in a sample of size $n$ is Dirichlet-multinomial$(n, \mathbf{1}_k)$ \citep[p80]{Johnson97}, 
\[
\mathbb{P}(\mathbf{N} = \mathbf{n} \mid A_\infty(s; t) = k) = \frac{n!}{k_{(n)}}. 
\]
This is a uniform distribution over the simplicial lattice $\{n_1, \ldots, n_k \ge 0: \sum_{i = 1}^k n_i = n\}$.  
Any point $(n_1, \ldots, n_k)$ in this lattice corresponding to exactly $j$ ancestors has $j$ non-zero components and $n - j$ zero components, 
with the non-zero components summing to $n$.  Thus the number of points in this lattice corresponding to 
$j$ ancestors is the number of ways of choosing the $j$ ancestors from $k$, times the number of points in the simplicial lattice 
$\{n_1, \ldots, n_j > 0: \sum_{i = 1}^j n_i = n \}$, that is, 
\[
{k \choose j} \times {n - 1 \choose j - 1},   
\]
and the result follows.  
\end{proof}

%
%%	Theorem: distribution of A_n(s; t)
%
\begin{theorem}	\label{thm:distribAn}
In a sample of size $n > 0$ taken at time $t$, the distribution of the number of ancestors at time $t - s$ is 
\begin{eqnarray} \label{samplelod:20}
\lefteqn{\mathbb{P}(A_n(s; t) = j)} \nonumber \\ 
	& = & n!{n-1\choose j-1} \sum_{k=j}^\infty  {k\choose j}\frac{1}{k_{(n)}}
\mathbb{P}\left(A_\infty(s; t) = k \mid A_\infty(s; t) > 0 \right), \quad j = 1, 2, \ldots, n. \nonumber \\
\end{eqnarray}
\end{theorem}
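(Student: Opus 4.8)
The plan is to apply the law of total probability, decomposing over the number of population ancestors $A_\infty(s;t)$ and then invoking Lemma~\ref{lemma:probAnGivenAinf}. First I would note that since $n>0$, drawing a sample of size $n$ at time $t$ presupposes $X(t)>0$, which by the remark preceding Eq.~(\ref{pgfAinfSurvival}) is equivalent to $A_\infty(s;t)>0$. Hence $A_n(s;t)$ is supported on $\{1,\dots,n\}$ (a nonempty sample has at least one ancestral lineage at $t-s$, and cannot have more than $n$), and for each such $j$ we may write
\[
\mathbb{P}(A_n(s;t)=j)=\sum_{k=j}^{\infty}\mathbb{P}\bigl(A_n(s;t)=j\mid A_\infty(s;t)=k\bigr)\,\mathbb{P}\bigl(A_\infty(s;t)=k\mid A_\infty(s;t)>0\bigr),
\]
the sum starting at $k=j$ because a sample with $j$ distinct ancestors at $t-s$ requires at least $j$ population ancestors there. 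Crucially, the conditional probability $\mathbb{P}(A_n(s;t)=j\mid A_\infty(s;t)=k)$ does not depend on any further information about the tree or on the total population size at $t$ — this is exactly the content of Lemma~\ref{lemma:probAnGivenAinf} — so no joint law of $A_\infty(s;t)$ and $X(t)$ is needed.

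Next I would substitute the expression from Lemma~\ref{lemma:probAnGivenAinf},
\[
\mathbb{P}\bigl(A_n(s;t)=j\mid A_\infty(s;t)=k\bigr)=\binom{k}{j}\frac{n!}{k_{(n)}}\binom{n-1}{j-1},\qquad 1\le j\le k,
\]
and factor the $k$-independent quantity $n!\binom{n-1}{j-1}$ out of the sum, obtaining
\[
\mathbb{P}(A_n(s;t)=j)=n!\binom{n-1}{j-1}\sum_{k=j}^{\infty}\binom{k}{j}\frac{1}{k_{(n)}}\,\mathbb{P}\bigl(A_\infty(s;t)=k\mid A_\infty(s;t)>0\bigr),
\]
which is Eq.~(\ref{samplelod:20}). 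The survival-conditioned probabilities here are those determined by the P\'olya--Aeppli law of Theorem~\ref{thm:PolyaAeppli} together with the conditioning identity in Eq.~(\ref{pgfAinfSurvival}).

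There is essentially no hard step: the argument is a single application of the law of total probability followed by the displayed rearrangement. The only points requiring a little care are the bookkeeping of the summation range ($k\ge j$ and $1\le j\le n$) and the reduction to the survival-conditioned population law, both of which follow immediately once one observes that sampling is only defined on $\{X(t)>0\}$. All of the substantive probabilistic content — the Dirichlet--multinomial allocation of the sample among the $k$ ancestral families, independent of total population size, and the explicit distribution of $A_\infty(s;t)$ — has already been established in Lemma~\ref{lemma:probAnGivenAinf} and Theorem~\ref{thm:PolyaAeppli}, so those results can simply be quoted.
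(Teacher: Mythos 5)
Your proposal is correct and follows essentially the same route as the paper's proof: a decomposition over the number of population ancestors $k$, the observation that a positive sample forces $A_\infty(s;t)>0$ so that conditioning on survival is automatic (and needed for correct normalisation), and substitution of the conditional law from Lemma~\ref{lemma:probAnGivenAinf}. No gaps.
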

\begin{proof}
Because the sample size $n$ is assumed to be strictly positive, so the number of ancestors of 
the entire population at time $t - s$ must also be strictly positive.  Equivalently, conditioning on $A_\infty(s; t) >0$ 
has no effect on the probability of the event $A_n(s; t) = j$.  Then 
from Lemma~\ref{lemma:probAnGivenAinf}, for $1 \le j \le k$ and $0 < n < \infty$ we have the joint distribution
\begin{eqnarray*}
\lefteqn{\mathbb{P}(A_n(s; t) = j, A_\infty(s; t) = k)} \\ 
	& = & \mathbb{P}(A_n(s; t) = j, A_\infty(s; t) = k \mid  A_\infty(s; t) >0) \\
	& = & \mathbb{P}(A_n(s; t) = j \mid A_\infty(s; t) = k ) \mathbb{P}(A_\infty(s; t) = k\mid A_\infty(s; t) >0) \\
	& = & n!{n-1\choose j-1}{k\choose j}\frac{1}{k_{(n)}} \mathbb{P}(A_\infty(s; t) = k\mid A_\infty(s ;t) >0).   
\end{eqnarray*}
Note that the conditioning $A_\infty(s; t) > 0$ in this proof is essential to ensure that the joint distribution is correctly 
normalised: The range of $A_\infty(s; t)$ includes zero, corresponding to extinction of the population, whereas the range of $A_n(s; t)$ for $n > 0$, 
does not include zero.  Since $A_\infty(s; t) \ge A_n(s; t) > 0$, the event $A_\infty(s; t) = 0$ must be excluded.
Summing over $k \ge j$ then gives the required marginal distribution of $A_n(s; t)$.  
\end{proof}
%
%%%%%%%%%%%% Section: Coalescence as t -> infinity, subcritical %%%%%%%%%%%%
%
\section{Coalescence at fixed $s$ as $t \to \infty$: subcritical case}
\label{sec:CoalescenceTtoInfty}

The asymptotic limit as $t \to \infty$ of a subcritical branching process conditioned on survival of the population is known as the 
quasi-stationary limit \citep{lambert2007quasi}.  
Noting that for $\alpha < 0$, 
\[
\lim_{t \to \infty} \mu(t; \alpha) = 0, \qquad \lim_{t \to \infty} p(s, t; \alpha) = e^{-|\alpha|s}, 
\]
the pgf of the number of ancestors $A_\infty(s;t)$ of the population at time $s$ back in the quasi-stationary limit is, from Eq.~(\ref{pgfAinfSurvival}), 
\[
\lim_{t \to \infty}  G_{A_\infty(s;t) \mid A_\infty(s;t) > 0}(z) = \frac{(1 - e^{-|\alpha|s})z}{1 - z e^{-|\alpha|s}}.  
\]
This is the pgf of a shifted geometric distribution, 
\begin{equation}	\label{quasiStatAinftyDist}
\lim_{t \to \infty} \mathbb{P}(A_\infty(s;t) = k \mid A_\infty(s;t) > 0) = (1 - e^{-|\alpha|s})e^{-|\alpha|(k - 1)s}, \qquad k = 1, 2, \ldots.  
\end{equation}

Let $\{W_j\}_{j=2}^\infty$ be the quasi-stationary inter-coalescent waiting times when the population has $j$ ancestors, and let 
$T_k=\sum_{j=k}^\infty W_j$ be the time back to when there are first $k-1$ ancestors (see Fig.~\ref{fig:TnDefinition}). Then 
\begin{eqnarray*}	
\mathbb{P}(T_k > s) & = & \lim_{t \to \infty} \mathbb{P}(A_\infty(s;t) \geq k \mid A_\infty(s;t) > 0) \\
	& = & \sum_{j=k}^\infty (1-e^{-|\alpha| s})e^{-|\alpha|(j-1) s} \\
	& = & e^{-|\alpha|(k-1) s}.  
\end{eqnarray*}
Thus $T_k$ has an exponential distribution with rate $|\alpha|(k-1)$ and mean $1/(|\alpha|(k-1))$.
The mean coalescence times
\[
\mathbb{E}\big [W_k\big ] = \mathbb{E}\big [T_k - T_{k+1}] = \frac{1}{|\alpha|k(k-1)},
\]
are the same as in the Kingman coalescent apart from scale.  If a mutation occurs while $k$ ancestors then the mutant relative frequency 
$U$ in the population is Beta $(1,k-1)$ with density $(k-1)(1-u)^{k-2},\> u \in (0,1)$. \\

%%%%%%%%%%%% Figure: definitions of T_n and W_n, sub-critical  %%%%%%%%%%%%
\begin{figure}[t]
\begin{center}
\centerline{\includegraphics[width=0.55\textwidth]{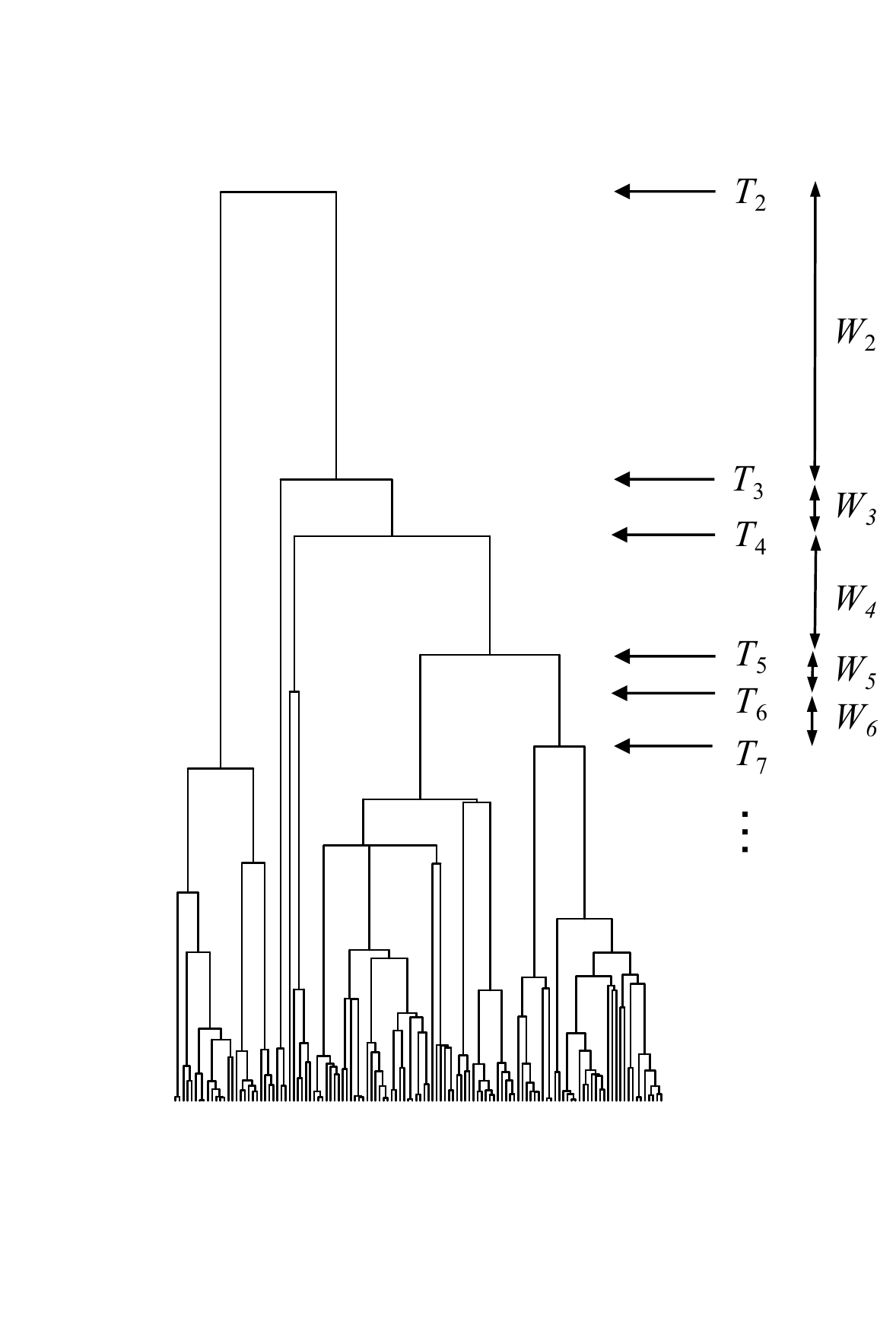}} % Plot constructed using <RRP_tree v4.R> 
\caption{Coalescent times $T_n$ measured back from the present, and inter-coalescent times $W_n = T_n - T_{n + 1}$.  } 
\label{fig:TnDefinition}
\end{center}
\end{figure}

%
%%%%%%%%%%%% Subsection: Sample coalescence  as t -> infinity, subcritical %%%%%%%%%%%%
%
\subsection{Quasi-stationary subcritical sample coalescent}
\label{sec:SubcriticalSampleCoalescence}

Substituting Eq.~(\ref{quasiStatAinftyDist}) into Eq.~(\ref{samplelod:20}) gives  
\[
\mathbb{P}(A_n(s; \infty) = j) = n!{n-1\choose j-1} \sum_{k=j}^\infty  {k\choose j}\frac{1}{k_{(n)}} (1 - e^{-|\alpha|s})e^{-|\alpha|(k - 1)s}.
\]
Writing 
\[
\frac{1}{k_{(n)}}=\frac{(k-1)!}{(k+n-1)!} = \frac{1}{(n-1)!}\cdot B(k,n)
=  \frac{1}{(n-1)!}\int_0^1u^{k-1}(1-u)^{n-1}du, 
\]
we have
\begin{eqnarray}	\label{dist:350}
\lefteqn{\mathbb{P}(A_n(s; \infty) = j)} \nonumber \\
	& = & n{n-1\choose j-1} \sum_{k=j}^\infty  {k\choose j}\int_0^1u^{k-1}(1-u)^{n-1}du\cdot
										(1-e^{-|\alpha| s})e^{-(k-1)|\alpha| s} \nonumber \\
	& = & n{n-1\choose j-1} (1-e^{-|\alpha| s})
				\int_0^1\frac{e^{-(j-1)|\alpha| s}}{(1 - ue^{-|\alpha| s})^{j+1}}u^{j-1}(1-u)^{n-1}du.  
\end{eqnarray}
We are interested in the expected sample inter-coalescent waiting times.  Begin with the integral 
\[
\mathcal{I}_j(u) = \int_0^\infty (1-e^{-|\alpha| s})\frac{e^{-(j-1)|\alpha| s}}{(1 - ue^{-|\alpha| s})^{j+1}}ds = \frac{1}{|\alpha| j(j-1)(1-u)^{j-1}}, 
\]
which follows from induction by checking that $\mathcal{I}_2(u) = \{2|\alpha|(1 - u)\}^{-1}$ and $\mathcal{I}_{j + 1}(u) = (j + 1)^{-1} \mathcal{I}_j'(u)$.  Then 
\begin{eqnarray*}
\mathbb{E}\left[W_j\right]
	& = & \mathbb{E}[T_j - T_{j + 1}] \\
	& = & \int_0^\infty \{ \mathbb{P}(T_j > s) - \mathbb{P}(T_{j + 1} > s) \} ds \\
	& = & \int_0^\infty \{ \mathbb{P}(A_n(s; \infty) \ge j) - \mathbb{P}(A_n(s; \infty) \ge j + 1) \} ds \\
	& = & \int_0^\infty \mathbb{P}\left(A_n(s; \infty) = j\right)ds \\
	& = & n{n-1\choose j-1}\frac{1}{|\alpha| j(j-1)}\int_0^1u^{j-1}(1-u)^{n-j}du \\
	& = & n{n-1\choose j-1}\frac{1}{|\alpha| j(j-1)}B(j,n-j+1) \\
	& = & \frac{1}{|\alpha| j(j-1)}.
\end{eqnarray*}

Once again this leads to the same expected waiting times as the Kingman coalescent, up to a scale.  
If neutral mutations between multiple alleles are included, in the limit of small mutation rates these waiting times lead to 
an identical distribution as that for a neutral stationary Wright-Fisher population \citep[Theorem~1]{BurdenGriffiths19a}.  
This equivalence has also been discovered by \citet[Corollary~1]{BurdenGriffiths23} using Laplace transform methods.  

In addition to its expectation, one can also deduce the distribution of $W_k = T_k - T_{k + 1}$.  Consider, with notation $w_k = t_k - t_{k + 1}$, 
\begin{eqnarray*}
\lefteqn{\mathbb{P}(T_k > t_k \mid T_{k + 1} = t_{k + 1}) = \mathbb{P}(A_k(W_k, \infty) = k)} \\
	& = & k(1 - e^{-|\alpha|w_k}) \int_0^1 \frac{e^{-|\alpha| (k - 1)w_k}}{(1 - u e^{-|\alpha|w_k})^{k + 1}} u^{k - 1}(1 - u)^{k - 1} du, 
\end{eqnarray*}
by substituting $s = w_k$, $n = j = l$ in Eq,~(\ref{dist:350}).  Since the conditional distribution does not depend on $t_{k + 1}$, 
\[
\mathbb{P}(W_k > w) = k(1 - e^{-|\alpha|w}) \int_0^1 \frac{e^{-|\alpha| (k - 1)w}}{(1 - u e^{-|\alpha|w})^{k + 1}} u^{k - 1}(1 - u)^{k - 1} du, 
\]
which differs from the exponential distribution waiting time for a Kingman coalescent.  The formula applies both to the coalescent tree of 
the entire population and to the coalescent tree of a finite sample.  
%
%%%%%%%%%%%% Section: Pop Coalescence as s -> 0 %%%%%%%%%%%%
%
\section{Population coalescence at fixed $t$ as $s \to 0$}
\label{sec:sToZero}

Recall from Theorem~\ref{thm:PolyaAeppli} that the number of ancestors at time $t - s$ of the population at time $t$ follows a P\'olya-Aeppli distribution, 
that is, the sum of a Poisson number of independent geometric random variables.  Write  
\[
A_\infty(s;t) = \sum_{j = 0}^{N(t)} M_j(s,t), 
\]
where $N(t) \sim \text{Pois}(x_0\mu(t; \alpha))$, $M_j(s,t) \sim \text{ShiftedGeom}(p(s,t))$, $j = 1, 2, \ldots$ with $p(s, t)$, defined by 
Eq.~(\ref{nuandpDef}), are independent, and $M_0(s, t) : = 0$.  The interpretation of Theorem~\ref{thm:PolyaAeppli} is that 
$N(t)$ is the number of founders at time $0$ of the entire population at time $t$, and $M_j(s, t)$ is the number of descendants 
of the $j$th founder alive at time $t - s$.  
For example, in the instance shown in Fig.~\ref{fig:AnDefinition}, $N(t) = 5$ and $(M_1(s, t), \ldots, M_5(s, t)) = (1, 2, 1, 1, 3)$.
Note that $M_j(s, t) > 0$ for $j > 0$ since the ancestral line of every member of the population at time $t$ 
passes through $t - s$ and that $A_\infty(s; t) = 0$ if and only if the population is extinct at time $t$.  

We next demonstrate that 
$M_j(s, t)$ can also be interpreted as a count of the descendants of a linear BD process with a single founder, conditioned on non-extinction.  
Consider a linear BD process $\{M(u)\}_{u \ge 0}$ with a single founder, $M(0) = 1$, birth rate $\hat{\lambda}$, and death rate $\hat{\mu}$,  
and transition probabilities as defined in Eq.~(\ref{linearBDtransitions}).     
Then \citet[p480]{Feller68} shows that  
\[
\mathbb{P}(M(u) = m) = \begin{cases}
	\hat{\mu}B(u) 											& m= 0 \\
	(1 - \hat{\lambda}B(u))(1 - \hat{\mu}B(u))(\hat{\lambda}B(u))^{m - 1}	& m = 1, 2, \ldots, 
	\end{cases}
\]
where 
\[
B(u) = \frac{1 - e^{(\hat{\lambda} - \hat{\mu})u}}{\hat{\mu} - \hat{\lambda}e^{(\hat{\lambda} - \hat{\mu})u}}.  
\]
Conditional on non-extinction of the process up to time $u$, 
\[
\mathbb{P}(M(u) = m \mid M(u) > 0) = (1 - \hat{\lambda}B(u))(\hat{\lambda}B(u))^{m - 1}, \qquad m = 1, 2, \ldots, 
\]
and so $(M(u) \mid M(u) > 0) \sim \text{ShiftedGeom}(\hat\lambda B(u))$.  Equating $\hat{\lambda}B(t - s)$ with $p(s, t; \alpha)$ defined by Eq.~(\ref{nuandpDef})
at fixed $t$ gives 
\[
\hat\lambda - \hat\mu = \alpha, \qquad \frac{\hat\mu}{\hat\lambda} = e^{-\alpha s}, 
\]
or 
\begin{equation} \label{lambdaHatMuHatSoln}
\hat\lambda(s) = \frac{\alpha}{1 - e^{-\alpha s}}, \qquad \hat\mu(s) = \frac{\alpha e^{-\alpha s}}{1 - e^{-\alpha s}}.  
\end{equation}

\citet{Nee94} define the {\em reconstructed process} (RP) of a linear BD process with birth rate $\hat\lambda$ and death rate $\hat\mu$ 
stopped at time $T$ after initiation to be the time-inhomogeneous pure-birth process constructed by pruning lineages which have not survived 
to time $T$.  Denote by $\mathcal{RP}(T, \hat\lambda, \hat\mu)$ the set of RPs for a given $T$, $\hat\lambda$ and $\hat\mu$.   
Elements of this set differ by their initial conditions $M(0)$.  
With this definition we have the following lemma, illustrated in 
Fig.~\ref{fig:RPLemma}.  
%
%%	Lemma: Partial coalescent tree of a FD is the RP of a linear BD process
%
\begin{lemma}	\label{lemma:RPlemma}
For any $0 < s < t$, that part of the coalescent tree of a parameter-$\alpha$ Feller diffusion $\{X(u)\}_{0 \le u \le t}$ 
lying within the interval $[0, t - s]$ is generated by a process in $\mathcal{RP}(t - s, \hat\lambda(s), \hat\mu(s))$, where $\hat\lambda(s)$ and $\hat\mu(s)$ 
are defined by Eq.~(\ref{lambdaHatMuHatSoln}).  
\end{lemma}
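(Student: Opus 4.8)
The plan is to establish the claim by matching finite‑dimensional distributions of the two processes on the interval $[0, t-s]$. The key observation already in hand is that, at fixed $t$, the descendant count $M_j(s,t)$ of a single founder has the $\mathrm{ShiftedGeom}(p(s,t;\alpha))$ law, and this coincides exactly with the law of $(M(t-s)\mid M(t-s)>0)$ for a linear BD process with a single founder and rates $\hat\lambda(s)$, $\hat\mu(s)$ as in Eq.~(\ref{lambdaHatMuHatSoln}), via the identity $\hat\lambda(s)B(t-s) = p(s,t;\alpha)$. First I would make precise what ``that part of the coalescent tree lying within $[0,t-s]$'' means: it is the forest of ancestral lineages of the time‑$t$ population, restricted to times in $[0,t-s]$, with $N(t)\sim\mathrm{Pois}(x_0\mu(t;\alpha))$ founding lineages at time $0$, each evolving independently. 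The target object, a generic element of $\mathcal{RP}(t-s,\hat\lambda(s),\hat\mu(s))$, is a time‑inhomogeneous pure‑birth forest obtained from a linear BD process run on $[0,t-s]$ by deleting every lineage with no descendant surviving to time $t-s$.

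The main steps are as follows. (1) Show the founder counts agree in distribution: the number of founders at time $0$ of the Feller coalescent is $\mathrm{Pois}(x_0\mu(t;\alpha))$ by Theorem~\ref{thm:PolyaAeppli}, and the initial condition $M(0)$ of the chosen RP is taken to be this same Poisson variable — this is exactly the freedom in ``elements of this set differ by their initial conditions $M(0)$.'' (2) Condition on a founder surviving (i.e.\ having at least one descendant at $t-s$); by the displayed computation, its surviving‑descendant count at $t-s$ is $\mathrm{ShiftedGeom}(\hat\lambda(s)B(t-s)) = \mathrm{ShiftedGeom}(p(s,t;\alpha))$, matching $M_j(s,t)$. (3) Upgrade from the single time‑$t-s$ marginal to the full tree on $[0,t-s]$: use the branching property. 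Conditionally on the number of surviving descendants at $t-s$, the shape of the pruned BD genealogy on $[0,t-s]$ depends only on the birth–death dynamics, which in turn — by Proposition~\ref{proposition1} and the construction in Section~\ref{sec:BPlimit} — is precisely the Feller diffusion's generator when $\hat\lambda-\hat\mu=\alpha$. The ancestral/coalescent structure of the Feller diffusion on $[0,t-s]$ is, by the BD‑limit characterisation, the same time‑inhomogeneous pure‑birth splitting process as the reconstructed BD process with these rates. (4) Finally, note the decoupling: the rates $\hat\lambda(s),\hat\mu(s)$ depend on $s$ but satisfy $\hat\lambda(s)-\hat\mu(s)=\alpha$ independently of $s$, and the RP on $[0,t-s]$ depends on $s$ only through the truncation time $t-s$ and the survival‑pruning, so the construction is consistent across $s$.

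I expect the main obstacle to be step (3): rigorously passing from equality of the one‑dimensional marginal $M_j(s,t)\stackrel{d}{=}(M(t-s)\mid M(t-s)>0)$ to equality of the \emph{entire} tree‑valued laws on $[0,t-s]$. The clean way around this is to work with the reconstructed process directly rather than the raw BD process: Nee et al.'s RP on $[0,t-s]$ is a Markov pure‑birth process whose splitting rates at time $u$ are explicit functions of $\hat\lambda,\hat\mu,u$ and $T=t-s$, and one shows these splitting rates coincide with the coalescent‑tree splitting rates of the Feller diffusion computed from Theorem~\ref{thm:PolyaAeppli} applied at a running time $u\in[0,t-s]$ — i.e.\ the number of ancestors at time $u$ of the population at time $t-s$ is again Pólya‑Aeppli with the same $p$, so the lineage‑counting process on $[0,t-s]$ is identical. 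A subtle point to handle carefully is the conditioning: the Feller coalescent tree is automatically conditioned on the time‑$t$ population being non‑extinct (every member has an ancestral line through $t-s$), which matches the RP's pruning convention of retaining only lineages with descendants at $t-s$; one must check no extra conditioning on the population size at $t-s$ is needed, which is exactly the independence‑of‑total‑population statement already recorded in Lemma~\ref{lemma:probAnGivenAinf}.
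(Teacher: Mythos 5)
Your proposal is correct and follows essentially the same route as the paper: the paper likewise matches the per-founder descendant count $\mathrm{ShiftedGeom}(p(s,t;\alpha))$ with $(M(t-s)\mid M(t-s)>0)$ to fix $\hat\lambda(s),\hat\mu(s)$, and then resolves what you call the main obstacle exactly as you suggest, by invoking the explicit splitting rate of the reconstructed process from \citet[Eq.~(1.5)]{Ignatieva20}, computing $\lambda^{\rm eff}(u;t-s,\hat\lambda(s),\hat\mu(s))=\alpha/(1-e^{-\alpha(t-u)})$, and observing that this rate is independent of $s$ so that the family of RPs is consistent and reproduces the marginals $A_\infty(t-u;t)$ from Theorem~\ref{thm:PolyaAeppli} at all running times. (One small slip: in your step (3) the relevant marginal is the number of ancestors at time $u$ of the population at time $t$, not at time $t-s$.)
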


%%%%%%%%%%%% Figure: Illustration of Lemma 2, RP of BD process is Feller diffusion coalescent tree %%%%%%%%%%%%
\begin{figure}[t]
\begin{center}
\centerline{\includegraphics[width=0.8\textwidth]{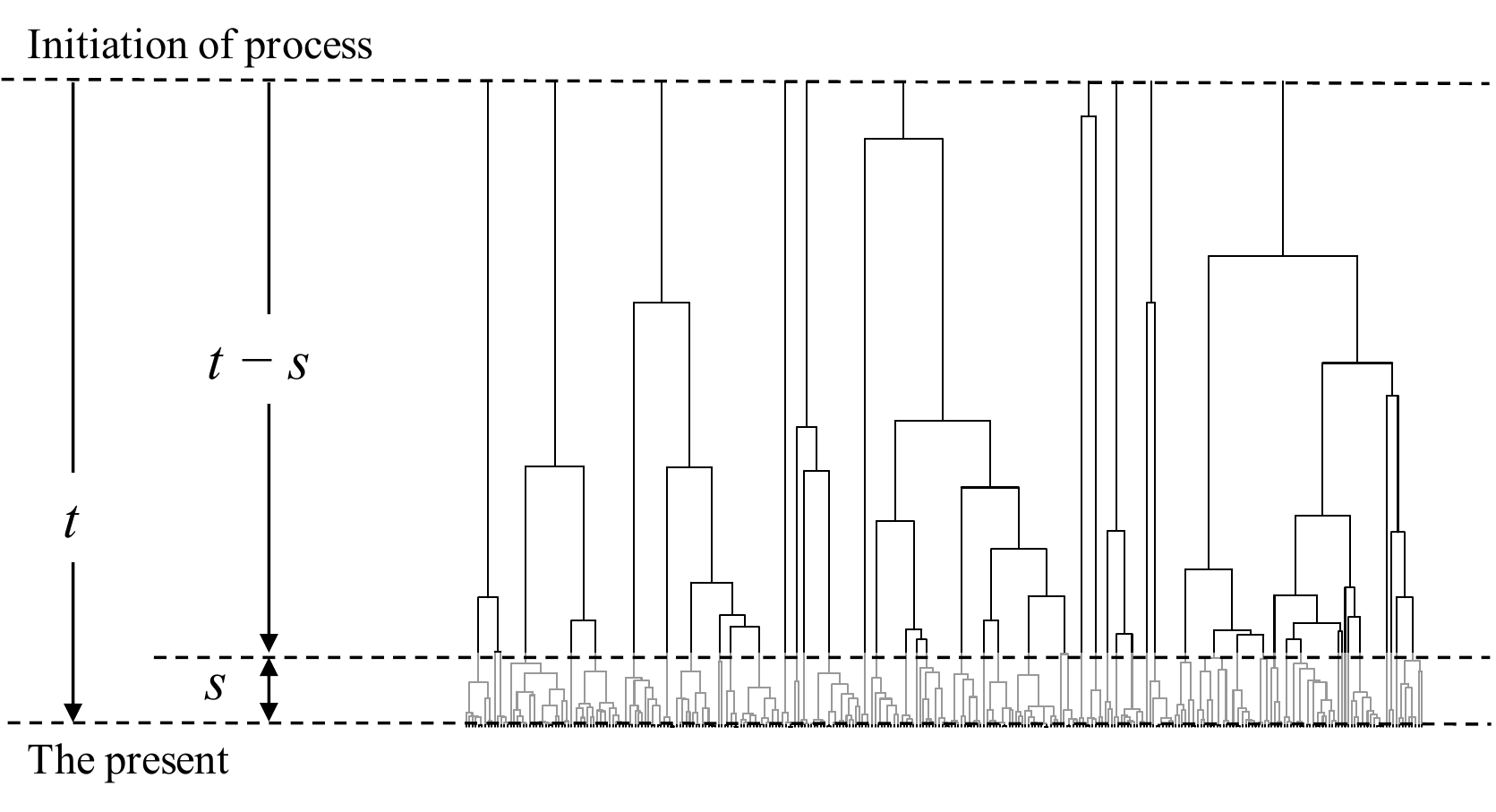}} % Plot constructed using <RRP_tree v4.R> 
\caption{Illustration of Lemma~\ref{lemma:RPlemma}: That part of the coalescent tree of the Feller diffusion $\{X(u)\}_{0 \le u \le t}$ lying in the interval $[0, t - s]$ 
 is generated by an element of $\mathcal{RP}(t - s, \hat\lambda(s), \hat\mu(s))$.} 
\label{fig:RPLemma}
\end{center}
\end{figure}
%%%

\begin{proof}
The inhomogeneous birth rate of any process in $\mathcal{RP}(T, \hat\lambda, \hat\mu)$ at time $u$ since initiation is equal to \citep[Eq.~(1.5)]{Ignatieva20}
\begin{eqnarray*}
\lefteqn{\lambda^{\rm eff}(u; T, \lambda, \mu)  } \\
	& = & \hat\lambda \times \mathbb{P}(\text{a single lineage born at time $u$ is not extinct by time $T$}) \\
	& = & \hat\lambda \times (1 - \hat\mu B(u)) \\
	& = & \hat\lambda \times \frac{\hat\lambda - \hat\mu}{\hat\lambda - \hat\mu e^{-(\hat\lambda - \hat\mu)(T - u)}}, \qquad 0 \le u \le T.  
\end{eqnarray*}
Substituting $t - s$ for $T$ and Eq.~(\ref{lambdaHatMuHatSoln}) for $\hat\lambda$ and $\hat\mu$, the inhomogeneous birth rate for any process 
in $\mathcal{RP}(t - s, \hat\lambda(s), \hat\mu(s))$ is 
\begin{eqnarray}	\label{lambdaEffDef}
\lambda^{\rm eff}(u; t - s, \hat\lambda(s), \hat\mu(s)) 
			& = & \frac{\hat\lambda(s) - \hat\mu(s)}{1 - (\hat\mu(s)/\hat\lambda(s))e^{-(\hat\lambda(s) - \hat\mu(s))(t - s - u)}} \nonumber\\
			& = & \frac{\alpha}{1 - e^{-\alpha(t - u)}},    \qquad 0 \le u \le t - s.   
\end{eqnarray}
The important point to note is that this inhomogeneous birth rate does not depend on $s$, and hence elements of the set of reconstructed processes 
$\{\mathcal{RP}(t - s, \hat\lambda(s), \hat\mu(s)) : 0 \le s < t\}$ differ only to the extent that we choose to stop the processes at different times $s$ short of the 
limiting time $t$. Since the running parameters $\hat\lambda(s)$ and $\hat\mu(s)$ are constructed from a 
process which generates as its RP the number of ancestors $A_\infty(t - u, t)$ of the entire population of a Feller diffusion with parameter $\alpha$, the result follows.
\end{proof}

We are now in a position to construct the coalescent tree of a Feller diffusion as the limiting case of the RP of a linear BD process.  
%
%%	Theorem: FD coalescent tree is a pure-birth process with given inhomogeneous birth rate
%
\begin{theorem}	\label{thm:coalescentTreeIsAnRP}
The coalescent tree of a Feller diffusion with parameter $\alpha$, observed at time $t$ since initiation, is a time-inhomogeneous pure-birth 
process with birth rate at time $u$ since initiation equal to 
\[
\lambda^{\rm coal}(u; t) := \frac{\alpha}{1 - e^{-\alpha(t - u)}}, \qquad 0 \le u < t.  
\]
\end{theorem}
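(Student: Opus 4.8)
The plan is to obtain the coalescent tree on the half-open interval $[0,t)$ as the $s\to 0$ limit of the reconstructed processes supplied by Lemma~\ref{lemma:RPlemma}, the crucial point being that the inhomogeneous birth rate appearing in that lemma does not depend on $s$.

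First I would record what Lemma~\ref{lemma:RPlemma} gives: for each fixed $s\in(0,t)$ the part of the coalescent tree of $\{X(u)\}_{0\le u\le t}$ lying in $[0,t-s]$ is a time-inhomogeneous pure-birth process whose birth rate at time $u$ since initiation equals $\lambda^{\rm eff}(u;t-s,\hat\lambda(s),\hat\mu(s)) = \alpha/(1-e^{-\alpha(t-u)})$ on $0\le u\le t-s$ by Eq.~(\ref{lambdaEffDef}) --- a function of $u$ and $t$ only --- with initial lineage count $A_\infty(t;t)\sim\text{Pois}(x_0\mu(t;\alpha))$ and lineage count $A_\infty(s;t)$ at time $u=t-s$, the latter a.s.\ finite by Theorem~\ref{thm:PolyaAeppli}.

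Next I would assemble these restrictions. For $0<s'<s<t$ we have $[0,t-s]\subset[0,t-s']$, and the process on $[0,t-s]$ is just the process on $[0,t-s']$ observed up to the earlier time $t-s$, so the family $\{\text{coalescent tree on }[0,t-s]\}_{0<s<t}$ is consistent, each member being pure-birth with the common rate $\alpha/(1-e^{-\alpha(t-u)})$. Taking the limit as $s\downarrow 0$ (equivalently, the increasing union of the tree-valued random variables) therefore yields a process $\{X(u)\}_{0\le u<t}$ that restricts to an element of $\mathcal{RP}(t-s,\hat\lambda(s),\hat\mu(s))$ for every $s$, hence a time-inhomogeneous pure-birth process on $[0,t)$ with birth rate $\lambda^{\rm coal}(u;t)=\alpha/(1-e^{-\alpha(t-u)})$. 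Since the coalescent tree of the Feller diffusion observed at time $t$ is by construction recovered from its restrictions to the $[0,t-s]$ as $s\to 0$, this limiting process is exactly that tree, which is the assertion of the theorem.

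The step I expect to be the real obstacle is making the limit $s\to 0$ legitimate. The rate $\lambda^{\rm coal}(u;t)$ blows up as $u\uparrow t$ (and $\lambda^{\rm coal}(u;0)=1/(t-u)$ likewise in the critical case), so the limiting object is a pure-birth process on the \emph{open-ended} interval $[0,t)$ with infinitely many lineages accumulating at the endpoint $u=t$ --- the ``coming down from infinity'' property --- and it does not extend to a pure-birth process on $[0,t]$. One has to check that in spite of this the number of lineages is a.s.\ finite for every $u<t$, which is exactly the content of the P\'olya--Aeppli (hence a.s.\ finite) law for $A_\infty(s;t)$ in Theorem~\ref{thm:PolyaAeppli}; this guarantees that the consistent family consists of genuine pure-birth processes and that the extension argument goes through.
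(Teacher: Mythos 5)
Your argument is correct and follows essentially the same route as the paper: both rest on Lemma~\ref{lemma:RPlemma} together with the observation that $\lambda^{\rm eff}(u;\,t-s,\hat\lambda(s),\hat\mu(s))$ is independent of $s$, so that letting $s\downarrow 0$ costs nothing. The only real difference is in how the limit is dressed up --- you glue the consistent family of restrictions and invoke the a.s.\ finiteness of $A_\infty(s;t)$ from Theorem~\ref{thm:PolyaAeppli}, whereas the paper identifies $\epsilon=\tfrac{1}{2}s$ in Proposition~\ref{proposition1} to confirm that the scaled birth--death process $X_s(t)=\tfrac{1}{2}sM(t-s)$ converges in generator to the Feller diffusion before appealing to the $s$-independence of the rate.
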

\begin{proof}
Consider the limit $s \to 0$ in Fig.~(\ref{fig:RPLemma}).  From Eq.~(\ref{lambdaHatMuHatSoln}), 
\[
\hat\lambda(s) = s^{-1} + \tfrac{1}{2} \alpha + \mathcal{O}(s), \quad
\hat \mu(s)        =s^{-1} - \tfrac{1}{2} \alpha + \mathcal{O}(s), \qquad \text{as } s \to 0. 
\]
Identify $\tfrac{1}{2}s$ with $\epsilon$ in Proposition~\ref{proposition1} and define a random variable 
\begin{equation}	\label{MtoXlimit}
X_s(t) = \tfrac{1}{2}s M(t - s), 
\end{equation}
where $\{M(u)\}_{0 \le u \le t - s}$ is a birth-death process with rates $\hat\lambda(s)$, $\hat\mu(s)$.  Then as $s \to 0$ the limiting generator of the process $X_s(t)$ 
is that of  a Feller diffusion with parameter $\alpha$.  The required result then follows immediately from Lemma~\ref{lemma:RPlemma} and that 
$\lambda^{\rm coal}(u; t) = \lambda^{\rm eff}(u; t - s, \hat\lambda(s), \hat\mu(s))$ is independent of $s$.  
\end{proof}

In the context of population genetics, the Feller diffusion is usually approached as a limit of a BGW process as described in Section~\ref{sec:BGWlimit}.  
The above procedure provides an alternative view in terms of that part of the population's coalescent tree at times earlier than $t - s$.  
As $s \to 0$ the effective birth rate $\lambda^{\rm coal}(t - s; t) \to \infty$, consistent with the coalescent tree ``coming down from infinity''.  
The running parameters $\hat\lambda(s)$, $\hat\mu(s)$ also become infinite as $s \to 0$ while their difference $\alpha$ remains fixed.  
Thus a Feller diffusion can be seen as the limit of a linear BD process in which births and deaths occur at an arbitrary high rate.  

It remains to match the scale $s$ of the BD limit with the scale $y_0$ and parameters $\log\lambda$ and $\sigma^2$ of the BGW limit
in such a way that the limiting process is the same Feller diffusion for both limits.  
From Eqs.~(\ref{BGWdiffLimit}), (\ref{alphaDef}) and (\ref{MtoXlimit}) we have 
\begin{equation}	\label{sMeaning}
\tfrac{1}{2}s = \frac{1}{y_0} = \frac{\log\lambda}{\alpha\sigma^2}. 
\end{equation}
Implicit in this matching is the notion that the ancestry of a large-population, near-critical BGW process is well approximated by 
the coalescent tree of the corresponding linear BD process.  
%
%%%%%%%%%%%% Section: Coalescent tree as a reversed-reconstructed process %%%%%%%%%%%%
%
\section{The coalescent tree of a super-critical Feller diffusion as a reversed reconstructed process}
\label{sec:RRProcess}

\cite{Gernhard08} and \cite{Ignatieva20} generate the coalescent tree of a supercritical 
linear BD process as a RRP.  The RRP is an inhomogeneous pure-death process which runs backwards in time from the present tracing the 
ancestral coalescent tree.  Construction of the RRP from a BD process consists of first constructing 
the RP for a fixed time since initiation with a single ancestor, and then imposing an improper uniform prior on the time since initiation 
and conditioning on the present number of leaves~\citep{Aldous05,Wiuf18}.  By effecting a time rescaling of the 
RRP to a rate-1 pure-death Yule process, ~\cite{Ignatieva20} reproduce earlier results of \citet[Theorem~4.1]{Gernhard08} 
for the distribution of coalescent times for a supercritical BD process with constant birth and death rates.  

%%%%%%%%%%%% Figure: Illustration of Lemma 2, RP of BD process is Feller diffusion coalescent tree %%%%%%%%%%%%
\begin{figure}[t]
\begin{center}
\centerline{\includegraphics[width=\textwidth]{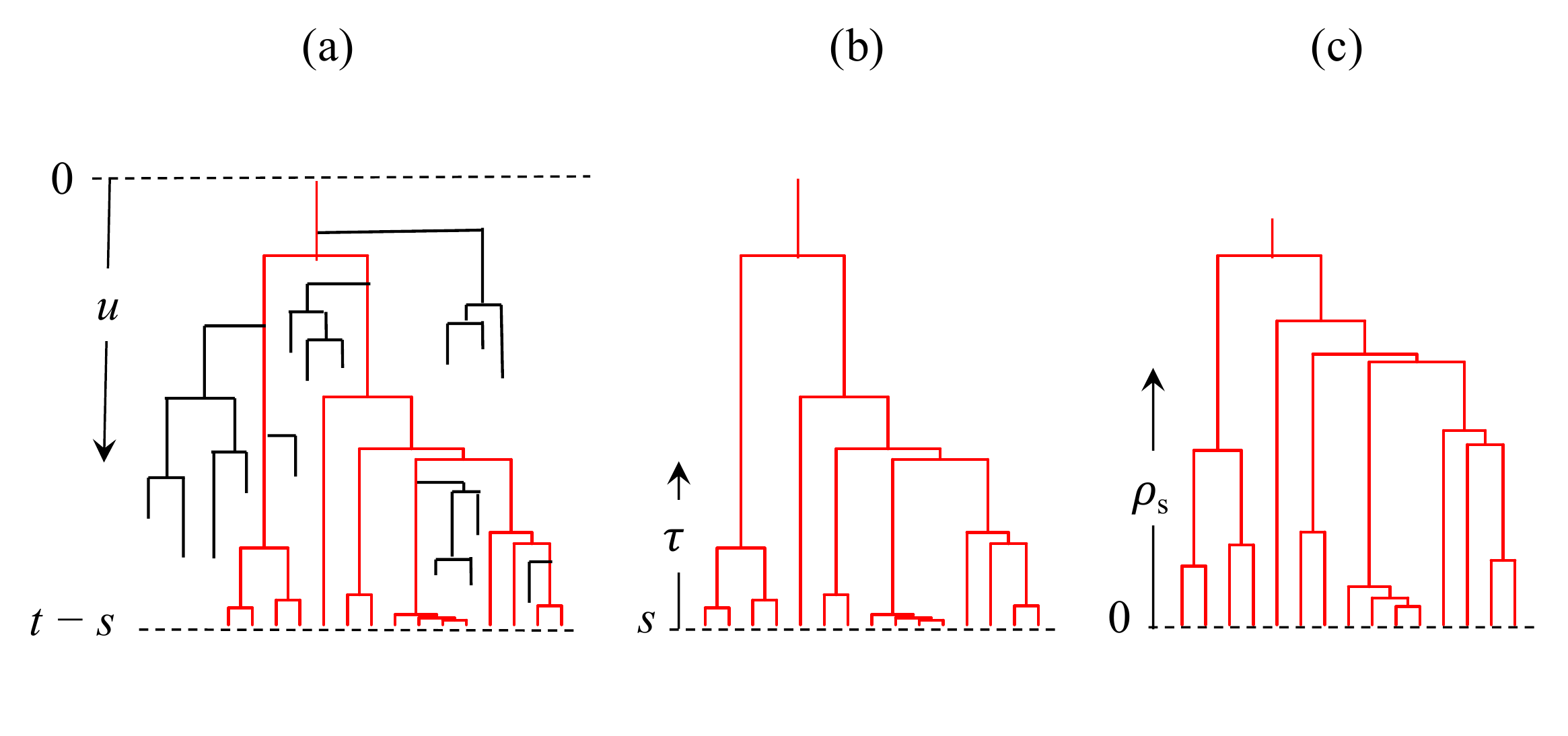}} % Plot constructed using <RRP_tree v4.R> 
\caption{Construction of the RRP from the RP of a linear BD process, and subsequent mapping to a rate-1 pure death process:  (a) a 
linear BD process process defined over the interval $0 \le u \le t - s$, with the corresponding RP shown in red;  (b) the constructed RRP over 
$s \le \tau < \infty$; and (c) the corresponding rate-1 pure death process over the interval $0 \le \rho_s < \infty$.  } 
\label{fig:RRP}
\end{center}
\end{figure}
%%%

Below we construct the coalescent tree for a super-critical Feller diffusion conditioned on $X(t) = x$ with an improper uniform prior
on initiation of a process with a single ancestor.  The construction is based on the $s \to 0$ limit of the RRP derived from the RP of a linear BD 
process on the interval $[0, t - s]$ with birth-rate $\hat\lambda(s)$ and death rate $\hat\mu(s)$.  The RP of a linear BD process is represented in 
Fig~\ref{fig:RRP}(a).  To construct the RRP represented in Fig.~\ref{fig:RRP}(b) we introduce a parameter $\tau = t - u$, $s \le \tau < \infty$, 
measured backwards from a time $s$ in the past before the present and assume an improper uniform prior on the time since initiation of the 
forward process.  From Eq.~(\ref{lambdaEffDef}), the effective death rate of this RRP is 
\[
\mu_{\rm eff}(\tau) = \frac{\alpha}{1 - e^{-\alpha\tau}}, \qquad s \le \tau < \infty.  
\]
The RRP is conditioned on a boundary at $\tau = s$ and is allowed to run backwards until the birth of the earliest ancestor.  

Following \citet[Section~2.1]{Ignatieva20}, the inhomogeneous pure-death process can be mapped onto a time-reversed rate-1 Yule process, 
represented in Fig~\ref{fig:RRP}(c).  To this end, define
a rescaled time $\rho_s(\tau)$ such that, if the random variable $T$ is the time of death of an individual reversed lineage starting at time $s$, 
then $\mathbb{P}(T > \tau) = e^{-\rho_s(\tau)}$ for $\tau \ge s$ and $e^{\rho_s(s)} = 1$.  Thus 
\[
\mu_{\rm eff}(\tau)d\tau = \frac{\mathbb{P}(\tau \le T < \tau + d\tau)}{\mathbb{P}(T \ge \tau)} = 
			\frac{e^{-\rho_s(\tau)} - e^{-\rho_s(\tau + d\tau)}}{e^{-\rho_s(\tau)}} = \rho_s'(\tau) d\tau, 
\]
which, together with the initial condition gives 
\[
\rho_s(\tau) = \int_s^\tau \mu_{\rm eff}(\xi) d\xi = \int_s^\tau \frac{\alpha e^{\alpha\xi}}{e^{\alpha\xi} - 1}d\xi= \log\frac{e^{\alpha\tau} - 1}{e^{\alpha s} - 1}.  
\]
The mapping to a rate-1 Yule process inverts to 
\begin{equation}	\label{tauOfRho}
\alpha\tau = \log\left(1 + (e^{\alpha s} - 1)e^{\rho_s} \right), \qquad 0 \le \rho_s < \infty.  
\end{equation}

As an aside we note that the transformation to $\rho_s(\tau)$ accords with a remark of \citet[p425]{o1995genealogy} regarding the limit of a near-critical continuous-time BGW process, 
once differences in notation and scaling, namely setting $\tau$ to unity and replacing $(\tau - s)/\tau$ with O'Connell's $r$, 
are accounted for.  
A similar time change also appears in \citet[below Eq.~(4)]{Harris20}.

Implicit in the above construction is an assumption that the RRP converges to a single ancestor.  
For the sub-critical case, $\alpha = \hat\lambda(s)  - \hat\mu(s) < 0$, one has that 
\[
\lim_{\tau \to \infty} \mathbb{P}(T > \tau) = \lim_{\tau \to \infty} e^{-\rho_s(\tau)} = 1 - e^{-|\alpha|s}, 
\]
and there is a finite probability that an ancestral lines 
will never converge.  For this reason the above construction only applies for the super-critical case $\alpha > 0$.  

It would be nice to dispense with the scale $s$ at this point by sending it to zero, but this entails beginning the time-reversed Yule 
process with an infinite number of particles at an infinite time in the past.  This is a manifestation of the effect that the coalescent tree of a 
diffusion process comes down from infinity.  However, recall from Eq.~(\ref{sMeaning}) that the scale $s$ also has meaning in terms of the 
BGW process underlying the Feller diffusion limit.  With this interpretation, the parameter $\alpha$ and 
currently observed scaled population $X = x$ are related to the current (assumed arbitrarily large) physical population $M = y$ 
of an underlying BGW process with mean and variance $\lambda$ and $\sigma^2$ of the number of offspring per generation via 
Eqs.~(\ref{MtoXlimit}) and (\ref{sMeaning}) by 
\begin{equation}	\label{xToYScale}
x = \tfrac{1}{2}s y = \frac{\log \lambda}{\alpha \sigma^2} y. 
\end{equation} 
Note that the initial conditions of the Feller diffusion process have been subsumed into the uniform prior on the time since initiation of the ancestral tree.  

Figure~\ref{fig:RRP_tree_n500_s0.001} shows (a) a plot of a rate-1 pure death process and (b) the same tree mapped from the timescale 
$\rho_s$ onto the timescale $\alpha\tau$.  This is a coalescent tree of the entire population with the branches 
over the recent short interval $0 \le \tau < s$ truncated.  

%%%%%%%%%%%% Figure: reverse reconstructed tree rho and tau  %%%%%%%%%%%%
\begin{figure}[t]
\begin{center}
\centerline{\includegraphics[width=0.7\textwidth]{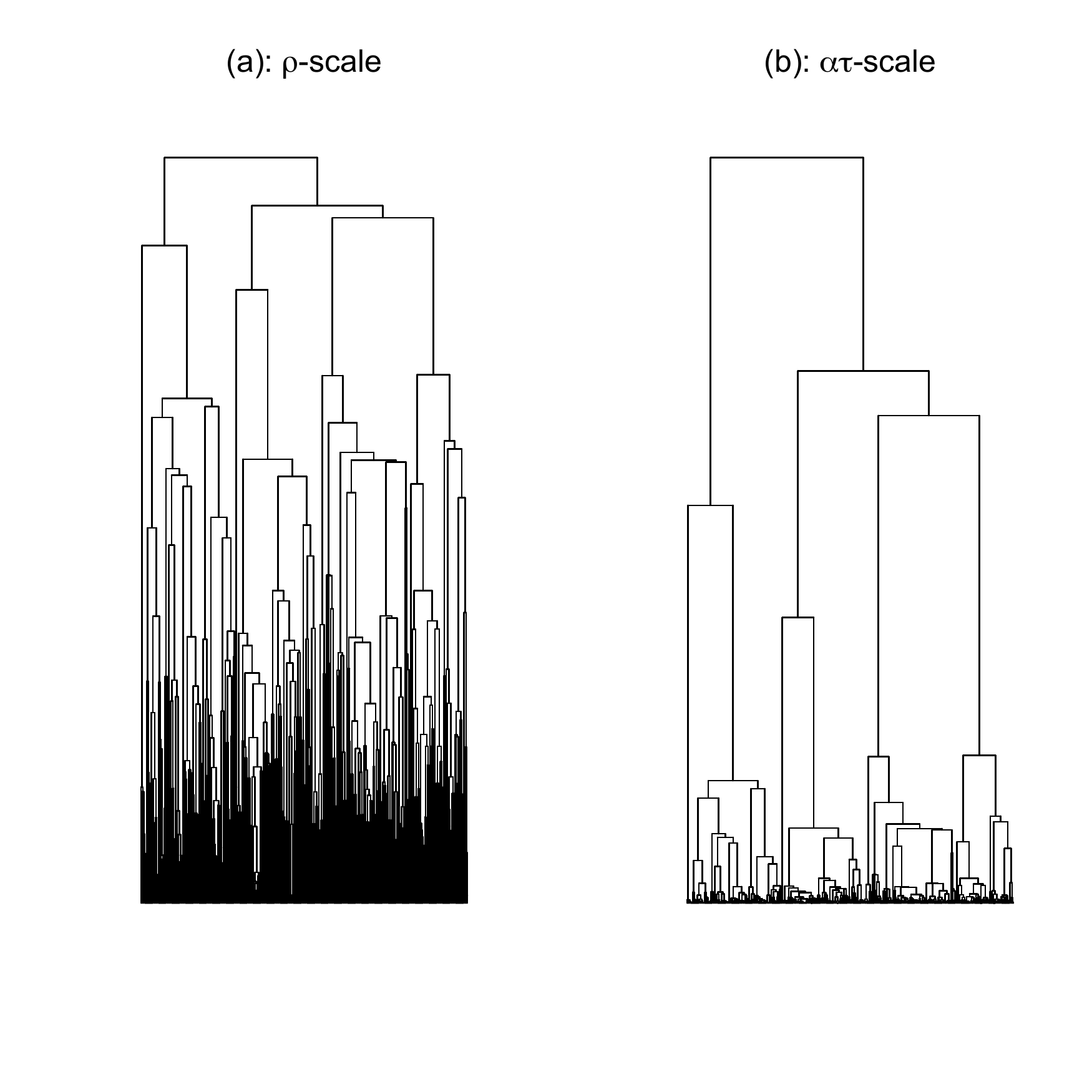}} % Plot constructed using <RRP_tree v4.R> 
\caption{(a) Tree constructed from a rate-1 pure-death process in time $\rho_s$ starting with $500$ leaves at time $\rho_s = 0$, 
and (b) the corresponding reverse-reconstructed tree mapped from the time scale $\rho_s$ onto a time scale $\alpha\tau$ for $\tau > s$ using 
Eq.~(\ref{tauOfRho}) with $\alpha s = 0.001$. } 
\label{fig:RRP_tree_n500_s0.001}
\end{center}
\end{figure}

%
%%%%%%%%%%%% Section: Coalescent times in the RRP of a super-critical Feller diffusion %%%%%%%%%%%%
%
\section{Coalescent times in the RRP of a super-critical Feller diffusion}
\label{sec:RRPCoalescentTimes}
The following theorem gives the distribution of coalescent times as defined in Fig.~\ref{fig:TnDefinition} and of the number of ancestors at time $\tau$ before present.  
%
%%	Theorem: number of ancestors in a reverse reconstructed supercritical Feller process is Poisson
%
\begin{theorem} \label{PoissonAncestors}
Let $\cdots < T_j <  \cdots < T_3 < T_2 < T_1$ be coalescence times in the coalescent tree of the reverse reconstructed process 
of a super-critical Feller diffusion beginning with population size $x$. The coalescent tree is assumed to be extended back in time so that there is a single 
ancestor. Then the coalescence times are distributed as a non-homogeneous Poisson process back in time with rate $\frac{d}{d\tau}\{-x/\beta(\tau)\}$, 
where $\beta(\tau)$ is defined by Eq.~(\ref{muBetaDef}), and
\begin{equation}
\mathbb{P}\left(N(\tau) = j \right) = \frac{1}{j!}
\left(\frac{x}{\beta(\tau)}\right)^j
\exp \left\{-\frac{x}{\beta(\tau)}\right\},\> \tau > 0,\ j=1,2,\ldots,
\label{Poisson:260}
\end{equation}
where $N(\tau)$ is be the number of ancestors at time $\tau$ back.  
\end{theorem}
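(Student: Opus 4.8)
The plan is to start from the reversed-reconstructed process (RRP) in the rescaled timescale $\rho_s$, where by construction (following \citet[Section~2.1]{Ignatieva20}) it is a rate-1 time-reversed Yule (pure-death) process started with some number of leaves at $\rho_s = 0$. In that timescale the number of surviving ancestral lineages at rescaled time $\rho$, call it $\widetilde N(\rho)$, has the well-known pure-death transition structure, and the coalescence events form the jump times of that process. I would then pull this back to the $\tau$ timescale using the change of variables $\rho_s(\tau) = \log\{(e^{\alpha\tau} - 1)/(e^{\alpha s} - 1)\}$ from Eq.~(\ref{tauOfRho}), noting that a rate-1 Poisson-like event structure in $\rho$ becomes an inhomogeneous point process in $\tau$ with intensity $\rho_s'(\tau) = \mu_{\rm eff}(\tau)$ multiplied by the current lineage count.

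The key step is to identify the correct initial condition. Rather than fixing the number of leaves, the RRP is conditioned on the currently observed scaled population $X = x$, which via Eq.~(\ref{xToYScale}) corresponds to $x = \tfrac12 s y$ with $y$ the physical population of the underlying BGW process. I would argue that, conditional on $x$ (equivalently $y$), the number of ancestral lineages present at the boundary time $\tau = s$, and more generally at any $\tau \ge s$, is Poisson: this is consistent with Theorem~\ref{thm:PolyaAeppli} and the remark there that $A_\infty(t;t)$ is Poisson, and with the family-size interpretation in Eq.~(\ref{FellersPoissonGammaMixture})–(\ref{NoOfFamiliesPois}), since conditioning on the total population being $x$ and then on a lineage surviving thins the ancestral count to a Poisson variable. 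Concretely, I expect $N(\tau)$ to be Poisson with mean equal to $x$ times the per-capita probability that a lineage present at $\tau$ leaves surviving descendants contributing to $X$, which should work out to $x/\beta(\tau)$. The cleanest way to pin down the mean is to use the exponential-family-size property: the population $x$ at the present is a sum over ancestors at time $\tau$ of i.i.d.\ exponential family sizes with mean $\beta(\tau)$ (by the $O'$Connell interpretation, Eq.~(\ref{FellersPoissonGammaMixture})), so the expected number of such ancestors given total $x$ is $x/\beta(\tau)$, and the number itself is Poisson by the Poisson–Gamma mixture structure of Feller's solution run backwards.

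From there the distribution (\ref{Poisson:260}) is immediate once the mean is identified, and the non-homogeneous Poisson process statement for the coalescence times follows because the increments $N(\tau)$ are independent across disjoint $\tau$-intervals (inherited from the independence of increments of the rate-1 pure-death/Yule process in $\rho_s$) with intensity $\frac{d}{d\tau}\{x/\beta(\tau)\}$ read off from the mean. I would also need to note that the $s \to 0$ truncation is harmless for $\tau > 0$: the construction of Theorem~\ref{thm:coalescentTreeIsAnRP} already shows the coalescent-tree birth rate $\lambda^{\rm coal}(u;t)$ is independent of $s$, so the law of $N(\tau)$ for $\tau > 0$ does not depend on the cutoff, and the ``extended back to a single ancestor'' assumption is exactly what lets $\beta(\tau) \to \infty$ as $\tau \to \infty$ give $\mathbb{P}(N(\tau) = 1) \to 1$.

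The main obstacle I anticipate is making the Poisson initial/marginal condition rigorous — i.e.\ justifying that conditioning the RRP on $X = x$ (an improper-prior, time-since-initiation-marginalised construction) yields exactly a Poisson$(x/\beta(\tau))$ count at each $\tau$, rather than merely matching means. I expect this to come out of combining the exponential family-size decomposition of Feller's solution with the reconstructed-process thinning from Lemma~\ref{lemma:RPlemma}, but care is needed because the uniform prior on initiation time has absorbed the original initial condition $x_0$, so the argument must be run entirely on the conditioned-on-$x$ tree and not on the forward diffusion.
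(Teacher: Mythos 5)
There is a genuine gap, and it sits exactly where you flagged it: the justification of the Poisson marginal. Your proposed route --- ``the population $x$ at the present is a sum over the $N(\tau)$ ancestors of i.i.d.\ exponential family sizes with mean $\beta(\tau)$, so conditioning on the total makes $N(\tau)$ Poisson with mean $x/\beta(\tau)$'' --- does not work. If $N\sim\mathrm{Pois}(\nu)$ and $X\mid N=n\sim\Gamma(n,1/\beta)$, then
\[
\mathbb{P}(N=n\mid X=x)\;\propto\;\frac{(\nu x/\beta)^n}{n!\,(n-1)!},
\]
which is a Bessel-type law, not Poisson; conditioning a compound Poisson--Gamma variable on its total destroys Poissonity, so ``the Poisson--Gamma mixture structure run backwards'' cannot by itself deliver Eq.~(\ref{Poisson:260}). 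The Poisson law actually arises by a law-of-small-numbers mechanism: the paper conditions on $M(t-s)=m$ lineages in the pre-limit reconstructed process, shows from the reverse Markov chain $\mathbb{P}(T_j>\tau_j\mid T_{j+1}=\tau_{j+1})=\{(1-F(\tau_j))/(1-F(\tau_{j+1}))\}^{j}$ that $T_m<\cdots<T_1$ are the order statistics of $m$ i.i.d.\ draws from the distribution with $1-F(\tau)=(e^{\alpha s}-1)/(e^{\alpha\tau}-1)$, and then sends $m\to\infty$, $s\to0$ with $\tfrac12 sm=x$, so that $N(\tau)\sim\mathrm{Binom}(m,1-F(\tau))\to\mathrm{Pois}(x/\beta(\tau))$. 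That order-statistics representation of the coalescence times is the missing idea in your sketch.

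The second half of your argument is also unsound as stated: you claim independence of increments is ``inherited from the independence of increments of the rate-1 pure-death/Yule process in $\rho_s$.'' A linear pure-death process does not have independent increments --- its death rate is proportional to the current lineage count, and for fixed $m$ the numbers of coalescences in disjoint intervals are jointly multinomial, hence negatively correlated. The Poisson-process structure only emerges in the limit, which is why the paper computes the pre-limit joint density of the $r$ largest order statistics, $\frac{m!}{(m-r)!}f(\tau_1)\cdots f(\tau_r)F(\tau_r)^{m-r}$, and verifies convergence to the finite-dimensional densities of an inhomogeneous Poisson process with intensity $\frac{d}{d\tau}\{-x/\beta(\tau)\}$. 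Your identification of the intensity and of the mean $x/\beta(\tau)$, the handling of the boundary at $\tau=s$, and the observation that the $s\to0$ truncation is harmless for $\tau>0$ are all correct, but both central probabilistic claims (the Poisson marginal and the independent increments) require the pre-limit binomial/order-statistics structure that your proposal does not supply.
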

\begin{proof}
Suppose that the current time is $t$ from the initiation of the population with one founder.
We argue conditional on $M(t-s) = m$ where $\{M(u)\}_{0 \le u \le t - s}$ is the RP in Fig.~\ref{fig:RRP}(a), 
then let $m \to \infty, s\to 0, \frac{1}{2}sm = x$, conditioning on the diffusion population size at $t$ being $x$ in the limit.
Coalescent times before taking the limit form a reverse Markov chain with
\begin{eqnarray}
\mathbb{P}\left(T_{j} > \tau_j\mid T_{j+1} = \tau_{j+1}\right) 
	& = & \exp \left\{-j\int_{\tau_{j+1}}^{\tau_j}\mu_{\text{eff}}(\xi)d\xi \right\} \nonumber\\
	& = & \exp \left\{-j\log \left(\frac{e^{\alpha \tau_j} - 1}{e^{\alpha \tau_{j+1}} - 1} \right) \right\} \nonumber\\
	& = &\left(\frac{e^{\alpha \tau_{j+1}} - 1}{e^{\alpha \tau_j} - 1} \right)^j,\  j = m - 1,\ldots, 1 \label{T:250} \\
\mathbb{P}\left(T_{m} > \tau_{m}\right) 
	& = & \left(\frac{e^{\alpha s} - 1}{e^{\alpha \tau_{m} }- 1} \right)^{m},\ \tau_{m} > s.\label{T:260}
\end{eqnarray}
The distribution of $T_j$ given $T_{j+1}=\tau_{j+1}$ is seen to be that of the minimum of $j$ independent random variables with distribution function $F$ such that
\[
1-F_{\tau_{j+1}}(\tau) = \frac{e^{\alpha \tau_{j+1}} - 1}{e^{\alpha \tau} - 1},
 \quad \tau > \tau_{j+1}.
\]
That is, $T_{m},T_{m-1}, \ldots T_1$ are distributed as order statistics from small to large in an independent sample of $m$ from a distribution with
\begin{equation*}
1-F(\tau) = \frac{e^{\alpha s} - 1}{e^{\alpha \tau} - 1}, \quad \tau > s,
\end{equation*} 
and corresponding density
\[
f(\tau) = \frac{\alpha e^{\alpha\tau}(e^{\alpha s} - 1)}{(e^{\alpha \tau} - 1)^2}, \quad \tau > s,
\]
because
\[
\mathbb{P}\left(T_{j} > \tau_j\mid T_{j+1} = \tau_{j+1}\right) 
= \left( \frac{1- F(\tau_j)}{1 - F(\tau_{j+1})}\right)^j,
\]
agreeing with (\ref{T:250}).
This is the probability that there are $j$ sample points larger than $\tau_j$ in the distribution truncated below at $\tau_{j+1}$. 
The dependence in $s$ appears in the distribution of $T_m$ in (\ref{T:260}).
Consider the pre-limit density of the $r$ maximal points $T_1 > T_2 > \cdots >T_r$ for fixed $r$,
\[
\frac{m!}{(m-r)!}f(\tau_1)\cdots f(\tau_t)F(\tau_r)^{m-r},\ \tau_1>\tau_2>\cdots > \tau_r.
\]
Now
\[
mf(\tau) \to 
 \frac{2x\alpha^2 e^{\alpha\tau}}{(e^{\alpha \tau} - 1)^2} = \frac{d}{d\tau} \left(\frac{-x}{\beta(\tau)}\right), 
 \]
 and
 \[
  F(\tau)^{m-r}
 \to  \exp \left\{-\frac{2\alpha x}{e^{\alpha \tau}-1}\right\} = \exp \left\{\frac{-x}{\beta(\tau)}\right\}. 
 \]
 The limit density of $T_1,\ldots ,T_r$ is therefore
 \[
 \left( \prod_{j=1}^r  \frac{d}{d\tau_j} \left\{\frac{-x}{\beta(\tau_j)}\right\} \right)
 \exp \left\{\frac{-x}{\beta(\tau_r)}\right\},\quad \tau_1 > \tau_2 > \cdots > \tau_r>0.
 \]
 These are finite-dimensional densities of points in the non-homogeneous Poisson process in the statement of the theorem, so the proof is completed. 
 The Poisson distribution (\ref{Poisson:260}) follows, however it can be argued directly. In the pre-limit the probability that in a sample of $m$ there are 
 $m-j$ points less than $\tau$ and $j$ points greater than $\tau$ is 
 \[
 {m\choose j}\left(1 - F(\tau)\right)^jF(\tau)^{m-j}
 \]
 whose limit is Eq.~(\ref{Poisson:260}).
 
 \end{proof}
%
%%	Corollary: number of ancestors at time tau in the past of a sample of size n
%
\begin{corollary}	\label{corollary:sampleCoalescence}
Let $N_n(\tau)$ be the number of ancestors of a sample of $n$ taken from a super-critical Feller
diffusion with current population size $x$. The coalescent tree of the sample is assumed
to be extended back in time so that there is a single ancestor. Then for $j = n, n-1, \ldots, 1$
\begin{eqnarray}	\label{sample:235}
\mathbb{P}\left(N_n(\tau) = j\right) 
	& = & \frac{n_{[j]}}{n_{(j)}j!} \left(\frac{x}{\beta(\tau)}\right)^j \exp \left\{-\frac{x}{\beta(\tau)}\right\}
														M\left( j, j+n, \frac{x}{\beta(\tau)}\right) \nonumber\\
	& = & \frac{1}{(j-1)!}{n\choose j} \left(\frac{x}{\beta(\tau)}\right)^j
						\int_0^1 \exp \left\{-(1 - v)\frac{x}{\beta(\tau)}\right\} v^{j - 1}(1 - v)^{n - 1} dv, \nonumber\\
\end{eqnarray}
where $M$ is Kummer's confluent hypergeometric function defined by~\citep[Eq.~(13.1.2)]{Abramowitz:1965sf} 
\[
M(a,b,z) = \sum_{k=0}^\infty\frac{a_{(k)}}{b_{(k)}}\frac{z^k}{k!}.
\]
Let $\left\{W^{(n)}_j\right\}_{j=2}^n$ be the inter-coalescent times in a sample of $n$.  
The mean inter-coalescent times are
\begin{equation}	\label{sample:236}
\mathbb{E}\left[W^{(n)}_j\right] = \frac{2(2\alpha)^{j-1}x^j}{(j-1)!}{n\choose j}
								\int_0^1v^{j - 1}(1 - v)^{n - 1} \int_0^\infty\frac{u^{j - 1}}{1 + u}e^{-2\alpha x(1 - v)u} dudv.
\end{equation}
\end{corollary}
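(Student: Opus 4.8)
The plan is to obtain the distribution of $N_n(\tau)$ by combining Theorem~\ref{PoissonAncestors}, which gives the distribution of the number $N(\tau)$ of population ancestors at time $\tau$ back, with a sampling argument analogous to Lemma~\ref{lemma:probAnGivenAinf}. First I would observe that, conditional on there being $N(\tau)=k$ population ancestors at time $\tau$, the relative family sizes of the population descended from those $k$ ancestors are Dirichlet$(\mathbf 1_k)$ (exactly as in the proof of Lemma~\ref{lemma:probAnGivenAinf}, since family sizes are i.i.d.\ exponential), so a sample of $n$ has its family sizes distributed Dirichlet-multinomial$(n,\mathbf 1_k)$, and the number of distinct ancestors $j$ hit by the sample satisfies
\[
\mathbb{P}(N_n(\tau)=j\mid N(\tau)=k)=\binom{k}{j}\binom{n-1}{j-1}\frac{n!}{k_{(n)}},\qquad 1\le j\le k.
\]
Then I would sum against the Poisson law \eqref{Poisson:260}: writing $\eta:=x/\beta(\tau)$,
\[
\mathbb{P}(N_n(\tau)=j)=\binom{n-1}{j-1}n!\sum_{k=j}^\infty \binom{k}{j}\frac{1}{k_{(n)}}\,\frac{\eta^{k}}{k!}e^{-\eta}.
\]
The remaining work is purely to identify this series with the two closed forms in \eqref{sample:235}.

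For the first form, I would manipulate $\binom{k}{j}/(k!\,k_{(n)})$ into ratios of rising factorials. Shifting the summation index $k=j+m$ and using $k_{(n)}=(j+m)(j+m+1)\cdots(j+m+n-1)$ together with $k!=(j+m)!$, the summand becomes (after extracting $\eta^{j}e^{-\eta}/j!$) a term of the form $\dfrac{j_{(m)}}{(j+n)_{(m)}}\dfrac{\eta^{m}}{m!}$ up to a constant prefactor; the sum over $m\ge0$ is then exactly $M(j,j+n,\eta)$ by the defining series of Kummer's function. Collecting the prefactor gives $\dfrac{n_{[j]}}{n_{(j)}j!}\eta^{j}e^{-\eta}M(j,j+n,\eta)$, where $n_{[j]}=n(n-1)\cdots(n-j+1)$ is the falling factorial; checking that $\binom{n-1}{j-1}n!$ equals $n!\,n_{[j]}/n_{(j)}j!$ times the appropriate combinatorial factor is the one bookkeeping step I would do carefully. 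For the second (integral) form, I would use the standard Euler-type integral $M(a,b,z)=\dfrac{\Gamma(b)}{\Gamma(a)\Gamma(b-a)}\int_0^1 e^{zv}v^{a-1}(1-v)^{b-a-1}\,dv$ with $a=j$, $b=j+n$, $z=\eta$, and then substitute $v\mapsto 1-v$ to move the exponential into the form $e^{-(1-v)\eta}$, absorbing the Beta-function constants into the binomial coefficient $\binom{n}{j}$; this reproduces the second line of \eqref{sample:235} directly.

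For the mean inter-coalescent times, I would proceed exactly as in Section~\ref{sec:SubcriticalSampleCoalescence}: since $W^{(n)}_j=T_j-T_{j+1}$ and the coalescent times are ordered, $\mathbb{E}[W^{(n)}_j]=\int_0^\infty\mathbb{P}(N_n(\tau)=j)\,d\tau$. Substituting the integral form of \eqref{sample:235}, interchanging the $v$- and $\tau$-integrals (justified by positivity/Tonelli), and changing variables from $\tau$ to $\eta=x/\beta(\tau)=2\alpha x/(e^{\alpha\tau}-1)$, so that $d\tau=-\,d\eta/(\alpha\eta)\cdot(\text{factor})$ — more precisely $e^{\alpha\tau}-1=2\alpha x/\eta$ gives $\alpha e^{\alpha\tau}d\tau = -2\alpha x\,d\eta/\eta^2$, hence $d\tau$ in terms of $d\eta/\eta$ up to the Jacobian — one is left with an elementary $\eta$-integral of the form $\int_0^\infty \eta^{j-1}e^{-(1-v)\eta}\,d\eta$ times the $v$-integrand. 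Writing $u$ for the variable dual to $(1-v)\eta$ so that the $1/(1+u)$ factor in \eqref{sample:236} emerges from the Jacobian of the $\tau\to\eta$ change combined with the $\beta$-factor, and collecting powers of $2\alpha$ and $x$, yields the stated double integral. The main obstacle I anticipate is not conceptual but combinatorial: correctly tracking the rising/falling-factorial prefactors so that the series sum lands exactly on $\dfrac{n_{[j]}}{n_{(j)}j!}M(j,j+n,\eta)$ rather than a constant multiple of it, and keeping the two changes of variable (first $k=j+m$ in the series, then $\tau\to\eta$ and the dual variable $u$ in the expectation) consistent so that the powers of $2\alpha$ and the $1/(1+u)$ weight come out precisely as in \eqref{sample:236}.
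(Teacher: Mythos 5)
Your proposal is correct and follows essentially the same route as the paper: condition on $N(\tau)=k$ using the Dirichlet-multinomial sampling probability of Lemma~\ref{lemma:probAnGivenAinf}, mix over the Poisson law of Theorem~\ref{PoissonAncestors}, shift the summation index to recognise the Kummer series (the paper obtains the Euler-type integral by writing $j_{(k)}/(j+n)_{(k)}$ as a ratio of Beta functions, which is the same computation as your integral representation of $M$), and compute $\mathbb{E}[W^{(n)}_j]=\int_0^\infty\mathbb{P}(N_n(\tau)=j)\,d\tau$ via the substitution $u=1/(e^{\alpha\tau}-1)$. The only quibble is that no substitution $v\mapsto 1-v$ is needed in the Euler integral — the factor $e^{-(1-v)\eta}$ arises directly from combining $e^{-\eta}$ with $e^{\eta v}$ — but this does not affect the result.
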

\begin{proof}
From Lemma~\ref{lemma:probAnGivenAinf}
\begin{equation}	\label{NN:25}
\mathbb{P}(N_n(\tau) = j \mid N(\tau) = l) = {l \choose j} \frac{n!}{l_{(n)}} {n - 1 \choose j - 1}, \qquad 1 \le j \le l, 
\end{equation}
because $N_n(\tau)$ is independent of the total current population size $X$ conditional on $N_\infty(\tau)$.  The distribution (\ref{NN:25}) 
also only depends on the ancestry at time $\tau$ back from present time, not on the fact that the process is initiated from one founder.

Then from Theorem~\ref{PoissonAncestors} the unconditional probability
\begin{eqnarray*}
\mathbb{P}\left(N_n(\tau) = j\right) & =& \sum_{l=j}^\infty{l \choose j} \frac{n!}{l_{(n)}} {n - 1 \choose j - 1}
								\frac{1}{l!}\left(\frac{x}{\beta(\tau)}\right)^l  \exp \left\{-\frac{x}{\beta(\tau)}\right\} \\
				& = & \frac{1}{j!}\left(\frac{x}{\beta(\tau)}\right)^j \exp \left\{-\frac{x}{\beta(\tau)}\right\} \frac{(n - 1)!n!}{(n - j)!(n - 1 + j)!} \\
				& & 		\quad\times \sum_{k=0}^\infty\frac{1}{k!}\left(\frac{x}{\beta(\tau)}\right)^k \frac{j_{(k)}}{(j + n)_{(k)}} \\
				& \equiv & \frac{n_{[j]}}{n_{(j)}j!} \left(\frac{x}{\beta(\tau)}\right)^j 
											\exp \left\{-\frac{x}{\beta(\tau)}\right\} M\left(j, j + n, \frac{x}{\beta(\tau)}\right),
\end{eqnarray*}
where the summation shift $l = k + j$ has been used in the second line.  The alternate integral form in Eq.~(\ref{sample:235}) follows from 
\begin{eqnarray*}
\lefteqn{e^{-x/\beta(\tau)} \sum_{k=0}^\infty\frac{1}{k!}\left(\frac{x}{\beta(\tau)}\right)^k \frac{j_{(k)}}{(j + n)_{(k)}}} \\
	& = & e^{-x/\beta(\tau)}  \sum_{k=0}^\infty\frac{1}{k!}\left(\frac{x}{\beta(\tau)}\right)^k \frac{B(j + k, n)}{B(j, n)} \\
	& = & \frac{1}{B(j, n)} e^{-x/\beta(\tau)} 
							\sum_{k=0}^\infty\frac{1}{k!}\left(\frac{x}{\beta(\tau)}\right)^k \int_0^1 v^{j + k - 1}(1 - v)^{n - 1} dv \\ 
	& = & \frac{(j + n - 1)!}{(j - 1)!(n - 1)!} \int_0^1 e^{-(1 - v)x/\beta(\tau)} v^{j + k - 1}(1 - v)^{n - 1} dv. \\
\end{eqnarray*}

The mean inter-coalescent times are
\[
\mathbb{E}\left[W^{(n)}_j\right] = \int_0^\infty\mathbb{P}\left(N_n(\tau)= j\right)d\tau
\]
which evaluates to Eq.~(\ref{sample:236}) after change of integration variable 
\[
u = \frac{1}{2\alpha\beta(\tau)} = \frac{1}{e^{\alpha\tau} - 1}, \qquad \tau = \frac{1}{\alpha} \log\left(1 + \frac{1}{u} \right), 
\]
\[
d\tau = - \frac{1}{\alpha} \frac{du}{u(u + 1)}
\]
and some further simplification.
\end{proof}
%
%%%%%%%%%%%% Subsection: Conclusions %%%%%%%%%%%%
%
\section{Summary and Conclusions}
\label{sec:Conclusions}

The results of this paper hinge on the two key theorems of Section~\ref{sec:Coalescence} which give formulae for the distribution of the number of 
ancestors at time $t - s$ of the population (Theorem~\ref{thm:PolyaAeppli}) and of a uniform random sample of size $n$ (Theorem~\ref{thm:distribAn}) from a 
Feller diffusion at a time $t$ since initiation of the process in terms of the parameter $\alpha$ of the diffusion and an initial condition $X(0) = x_0$.  
Two applications of these theorems are given.  
The first application is an efficient derivation of sample coalescent waiting times for the quasi-stationary subcritical Feller diffusion (Section~\ref{sec:CoalescenceTtoInfty}).

In the second application the coalescent tree of a supercritical Feller diffusion is identified as the limit of the RP of a BD process with appropriately 
scaled birth and death rates (Theorem~\ref{thm:coalescentTreeIsAnRP}).  The corresponding coalescent tree assuming a single ancestor is then used 
to calculate the distribution of coalescent times before the present time 
(Theorem~\ref{PoissonAncestors}) and to obtain a formula for expected coalescent times in the ancestry of a finite sample 
(Corollary~\ref{corollary:sampleCoalescence}).

With the intention of being of interest to the population genetics community, who are familiar with the \cite{kimura1964diffusion} limit 
in which the population size is taken to infinity while generation times and mutation rates shrink to zero, we have taken the approach of treating the Feller 
diffusion primarily as the continuum limit of a discrete BGW process.  The results of our theorems can be translated to all physically meaningful quantities in the 
underlying BGW process being approximated, 
including the mean $\lambda$ and variance $\sigma^2$ of the number of offspring per parent per generation, initial population counts at initiation of a BG 
process $m_0$ and $y_0$, and observed population counts at a later time $y$, via Eqs.~(\ref{BGWdiffLimit}), (\ref{alphaDef}), and (\ref{xToYScale}).  
%
%%%%%%%%%%%
%

%% If you have bibdatabase file and want bibtex to generate the
%% bibitems, please use
%%
%\pagebreak
%\section*{References}
%\bibliographystyle{elsarticle-num} 
\bibliographystyle{elsarticle-harv}\biboptions{authoryear}
%\section*{References}
%\bibliography{PopulationGenetics}

\end{document}